\documentclass{article}
\usepackage{latexsym,amsfonts,palatino,eulervm,color,amsmath,amssymb,amsthm,graphics,graphicx,comment,float}
%\usepackage{showlabels}
%\counterwithout{figure}{section}
\textwidth=15cm
\textheight=22cm
\hoffset=-1.5cm
\voffset=-1cm
\setlength{\parindent}{0em}
\setlength{\parskip}{1em}
\usepackage[hang,flushmargin]{footmisc}
\newcommand\blfootnote[1]{%
  \begingroup
  \renewcommand\thefootnote{}\footnote{#1}%
  \addtocounter{footnote}{-1}%
  \endgroup
}

\newtheorem{theorem}{Theorem}[section]
\newtheorem{lemma}[theorem]{Lemma}

\newtheorem{example}[theorem]{Example}

\def\n{\noindent}
\def\GL{\mathrm{GL}}
\def\Q{\mathbb{Q}}

\def\C{\mathbb{C}}
\def\R{\mathbb{R}}

\def\lcm{\mathrm{lcm}}

\def\A{\mathcal{A}}
\def\S{\mathcal{S}}
\def\Bcal{\mathcal{B}}

\title{Alternating sign matrices of finite multiplicative order}
\author{Cian O'Brien \and Rachel Quinlan}

\begin{document}
\maketitle
\begin{abstract}
    We investigate alternating sign matrices that are not permutation matrices, but have finite order in a general linear group. We classify all such examples of the form $P+T$, where $P$ is a permutation matrix and $T$ has four non-zero entries, forming a square with entries $1$ and $-1$ in each row and column. We show that the multiplicative orders of these matrices do not always coincide with those of permutation matrices of the same size. We pose the problem of identifying finite subgroups of general linear groups that are generated by alternating sign matrices.
\end{abstract}

\n\emph{Keywords}: Alternating sign matrix, minimum polynomial.\\
\n\emph{MSC}: 15B36, 05C50, 05C20

\blfootnote{Cian O'Brien, School of Mathematics, Cardiff University, \texttt{obrien.cian@outlook.com} \\
Rachel Quinlan, School of Mathematical and Statistical Sciences, National University of Ireland, Galway, \texttt{rachel.quinlan@nuigalway.ie}}

\section{Introduction}

An \emph{alternating sign matrix (ASM)} is a $(0,1,-1)$-matrix with the property that the non-zero entries in each row and column alternate in sign, beginning and ending with $+1$. Alternating sign matrices were first investigated by Mills, Robbins, and Rumsey \cite{MRR}, in a context arising from the classical theory of determinants. Connections to fields such as statistical mechanics \cite{Zeilberger} and enumerative combinatorics \cite{Kuperberg} were subsequently discovered, and ASMs continue to attract sustained interest from diverse viewpoints. We refer to Bressoud's book \cite{Bressoud}, for a comprehensive account of the emergence of attention to ASMs and the mathematical developments that ensued. 

A recurrent theme in the study of ASMs is their occurrence, in independent contexts, as natural generalizations of permutation matrices. This invites the question of whether and how familiar themes in the study of permutations can be applied or adapted to ASMs. For example, ASMs first emerged in the definition of the $\lambda$-determinant of a square matrix, which involves adapting the technique of Dodgson condensation by replacing the usual $2\times 2$ determinant with a version involving a parameter $\lambda$. Alternating sign matrices play the role for the $\lambda$-determinant that permutations do for the special case of the classical determinant, which arises of the value of $\lambda$ is set to 1. Lascoux and Sch\"{u}tzenberger showed in \cite{bruhat} that the set of $n\times n$ ASMs is the unique minimal lattice extension of the set of $n\times n$ permutation matrices under the \emph{Bruhat partial order}. An extension of the concept of Latin squares, which arise by replacing permutation matrices with ASMs, is investigated in \cite{ashl1} and \cite{ashl2}.

Our focus in this article is on non-singular ASMs with the special property of having finite order as elements of the general linear group. This topic connects to the position of permutations among all ASMs, and also to some recent attention in the literature to the behaviour on ASMs of algebraic invariants such as the spectral radius, characteristic polynomial and Smith normal form \cite{spectra1,spectra2, snf}.

In \cite{spectra2}, Brualdi and Cooper study the maximum possible spectral radius of an ASM. They note that the minimum spectral radius of an ASM is more easily identified, since every ASM has common row sum 1 and hence has 1 as an eigenvalue; moreover the permutation matrices are examples of ASMs whose eigenvalues all have modulus 1. The following example is presented in \cite{spectra2}, to show that the minimum possible spectral radius of 1 may also occur in the case of an ASM that includes negative entries.

\begin{example}\label{finite} The matrix
$$
A = \left(\begin{array}{rrrrr}
0 & 0 & 1 & 0 & 0 \\
1 & 0 & -1 & 1 & 0 \\
0 & 0 & 1 & -1 & 1 \\
0 & 0 & 0 & 1 & 0 \\
0 & 1 & 0 & 0 & 0
\end{array}\right)
$$
is an ASM satisfying $A^6=I_5$, the $5\times 5$ identity matrix. Its characteristic polynomial is $(x-1)^2(x+1)(x^2-x+1)$ and its minimum polynomial is $(x-1)(x+1)(x^2-x+1)$. 
\end{example}

Within the set $\A_n$ of all $n\times n$ ASMs, the set $\S_n$ of permutation matrices is a multiplicative group of $n!$ elements. In the following lemma, we observe that a set of ASMs that is a group under matrix multiplication must consist of permutation matrices.

\begin{lemma}\label{group}
Suppose that $A$ and $B$ are $n\times n$ ASMs that satisfy $AB=I_n$. Then $A$ and $B$ are permutation matrices. 

\begin{proof}
The first row of $A$ has a 1 as its only nonzero entry; suppose that this occurs in position $j$. Then a 1 in the $(j,1)$ position is the only nonzero entry of Column 1 of $B$. Every subsequent column of $B$ is orthogonal to Row 1 of $A$, so Row $j$ of $B$ has only zeros after its first entry. Similarly, Rows $2,\dots ,n$ of $A$ are all orthogonal to Column 1 of $B$, so the only nonzero entry of Column $j$ of $A$ is the first. Deleting Row 1 and Column $j$ from $A$, and deleting Column 1 and Row $j$ from $B$, leaves a pair of matrices $A'$ and $B'$ in $\A_{n-1}$ that satisfy $A'B'=I_{n-1}$. The conclusion follows by induction on $n$.
\end{proof}
\end{lemma}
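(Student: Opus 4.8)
The plan is to induct on $n$, at each step peeling off from $A$ one row and one column that each carry a single nonzero entry, together with the matching column and row of $B$, and then to recognise what remains as a smaller instance of the same situation. The case $n=1$ is trivial. Before setting up the induction I would isolate the one structural fact that makes the whole argument work.

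That fact is a \emph{corner property}: the first row of any ASM equals $e_j^{\top}$ for some $j$, i.e. it has a single nonzero entry, equal to $1$ (and symmetrically for the last row and for the first and last columns). To see this, note that each nonzero entry of the first row is the topmost nonzero entry of its column; since the nonzero entries of a column alternate in sign beginning with $+1$, every such topmost entry is $+1$, so the first row contains no $-1$. As the nonzero entries within a row must also alternate, a row with no $-1$ has at most one nonzero entry, and the row sum $1$ forces exactly one. Granting this, write Row $1$ of $A$ as $e_j^{\top}$. Reading off the first row of $AB=I_n$ gives Row $j$ of $B$ equal to $e_1^{\top}$; the corner property applied to Column $1$ of $B$ says this column has a unique nonzero entry, which must therefore be $B_{j1}=1$, so Column $1$ of $B$ equals $e_j$. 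Consequently the first column of $AB=I_n$ reads $A e_j = e_1$, i.e. Column $j$ of $A$ equals $e_1$.

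With the single entries located, I would delete Row $1$ and Column $j$ from $A$ and Column $1$ and Row $j$ from $B$, obtaining $A',B'$, and verify two things: that $A',B'$ lie in $\A_{n-1}$, and that $A'B'=I_{n-1}$. The first should follow because each deleted line carries its only nonzero entry at the crossing point $(1,j)$, so its removal deletes only zeros from the surviving rows and columns and leaves every alternation pattern intact. The second should follow because the only cross terms dropped in passing from $(AB)_{im}$ to $(A'B')_{im}$ for $i,m\ge 2$ involve the entries $A_{ij}$ with $i\ge 2$, which vanish since Column $j$ of $A$ is $e_1$; hence $(A'B')_{im}=\delta_{im}$ and the induction hypothesis applies to $A',B'$.

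The hard part will be this last bookkeeping: confirming rigorously that the deletions preserve membership in $\A_{n-1}$ and the identity-product relation. Everything upstream is driven by the corner property, which I regard as the crux; once it is in place, the locations of the single nonzero entries and the reduction to size $n-1$ come out simply by reading off individual rows and columns of $AB=I_n$.
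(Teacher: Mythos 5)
Your proposal is correct and takes essentially the same approach as the paper's proof: identify the unique $1$ in Row $1$ of $A$, deduce that Column $1$ of $B$, Row $j$ of $B$, and Column $j$ of $A$ are standard basis vectors, delete the corresponding row/column pairs, and induct on $n$. The only difference is that you make explicit details the paper leaves implicit (the proof of the corner property and the verification that the deletions preserve both ASM membership and the relation $A'B'=I_{n-1}$), and these verifications are carried out correctly.
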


While Lemma \ref{group} eliminates the possibility that $\A_n$ could contain multiplicative groups other than subgroups of $\S_n$, Example \ref{finite} demonstrates the existence of finite multiplicative groups that are \emph{generated} by (non-permutation) ASMs. We remark that the group generated by the matrix $A$ of Example \ref{finite} is isomorphic to a subgroup of the symmetric group $S_5$. However, since every element of order 6 in $S_5$ consists of a $2$-cycle and a $3$-cycle, disjoint from each other, every $5\times 5$ permutation matrix of order $6$ has characteristic polynomial $(x^2-1)(x^3-1) = (x-1)^2(x+1)(x^2+x+1)$. Thus the matrix $A$ of Example \ref{finite} is not similar to a permutation matrix.

One may pose the question of which finite subgroups of $\GL(n,\R)$ are generated by invertible alternating sign matrices, and which such groups do not have isomorphic copies within $S_n$. In this article, we consider the case of finite cyclic subgroups and investigate elements of finite multiplicative order in a particular subset of $\A_n$.

In order to describe the class of ASMs of interest, we introduce the notion of a $T$-block, adapted from \cite{asbg1, asbg2}. A T-block is a $n\times n$ matrix whose non-zero entries form a (not necessarily contiguous) copy of
\[
\pm\left(\begin{array}{rr}
1 & -1 \\
-1 & 1
\end{array}\right).\]

% one of the following matrices.

% \[
% \left(\begin{array}{rr}
% 1 & -1 \\
% -1 & 1
% \end{array}\right), \ \ 
% \left(\begin{array}{rr}
% -1 & 1 \\
% 1 & -1
% \end{array}\right)
% \]

We denote by $T(i_1,j_1,i_2,j_2)$ the $T$-block with $1$ in positions $(i_1,j_1)$ and $(i_2,j_2)$, and $-1$ in positions $(i_1, j_2)$ and $(i_2, j_1)$, where $i_1\neq i_2$ and $j_1\neq j_2$. We remark that this notational designation implies that $T(i_1,j_1,i_2,j_2)=T(i_2,j_2,i_1,j_1)$. Whenever the situation is sufficiently specified, we will choose the version with $i_1<i_2$. The following assertion is essentially Theorem 6.2 of \cite{asbg1}.

\begin{theorem}\label{ASM interchange}
Every $n \times n$ ASM can be obtained from the identity matrix $I_n$ through a sequence of additions of $T$-blocks, in such a way that an ASM is obtained at every step.
\end{theorem}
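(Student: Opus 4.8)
The plan is to translate the statement into the language of corner sums, introduce a monovariant, and run an induction whose steps are reversed at the end. For an $n\times n$ ASM $A=(a_{ij})$, consider the corner-sum array $C_{ij}=\sum_{k\le i,\,l\le j}a_{kl}$, with $C_{0j}=C_{i0}=0$. It is standard that $A\mapsto C$ is a bijection onto the integer arrays satisfying $C_{in}=i$, $C_{nj}=j$, and $C_{ij}-C_{i-1,j},\,C_{ij}-C_{i,j-1}\in\{0,1\}$. A direct computation shows that adding the $T$-block $T(i_1,j_1,i_2,j_2)$ with $i_1<i_2$ and $j_1<j_2$ increases $C_{ij}$ by $1$ on exactly the rectangle $i_1\le i\le i_2-1$, $j_1\le j\le j_2-1$ and fixes every other corner sum, while the opposite orientation (the negation of this block, itself a $T$-block) decreases $C$ by $1$ on the same rectangle. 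Since $C_{ij}$ is simultaneously a sum of partial column sums and of partial row sums, all lying in $\{0,1\}$, we have $C_{ij}\le\min(i,j)$ for every ASM, with equality throughout precisely when $A=I_n$; thus $I_n$ carries the componentwise-largest corner-sum array.

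As the negation of a $T$-block is a $T$-block and the intermediate matrices are unchanged under reversal, it suffices to exhibit, for an arbitrary ASM $A$, a sequence of $T$-block additions staying inside $\A_n$ and terminating at $I_n$; reading this sequence backwards then proves the theorem. Progress is measured by the defect $\delta(A)=\sum_{i,j}(\min(i,j)-C_{ij})\ge 0$, which vanishes only at $I_n$. The heart of the argument is the claim that whenever $A\ne I_n$ there is a positive-orientation block $T(i_1,j_1,i_2,j_2)$ (with $i_1<i_2$, $j_1<j_2$) for which $A+T\in\A_n$. Adding such a block raises $C$ on a nonempty rectangle and so strictly decreases the nonnegative integer $\delta$; since $\delta$ cannot decrease forever, finitely many such additions reach $I_n$.

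In corner-sum terms the claim requests a rectangle $R=[i_1,i_2-1]\times[j_1,j_2-1]$ that can be flipped up, i.e.\ such that raising $C$ by $1$ on $R$ preserves the differencing conditions; writing $V_{ij}=C_{ij}-C_{i-1,j}$ and $H_{ij}=C_{ij}-C_{i,j-1}$, this amounts to the four boundary requirements $V_{i_1,j}=0$, $V_{i_2,j}=1$ for $j_1\le j\le j_2-1$ and $H_{i,j_1}=0$, $H_{i,j_2}=1$ for $i_1\le i\le i_2-1$. The permutation case is a clean illustration: if $A\ne I_n$ is a permutation matrix it has an inversion, and undoing that inversion is exactly the addition of such a positive block and keeps $A$ a permutation matrix, so every permutation matrix is reached from $I_n$ by factoring the permutation into transpositions. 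The main obstacle is to prove the claim in general, i.e.\ to locate a flippable rectangle whenever $\delta(A)>0$. I would handle this by an extremal choice --- selecting a deficient position $(p,q)$ with $C_{pq}<\min(p,q)$ that is minimal in an appropriate sense, and verifying that its associated rectangle meets the four boundary conditions, a finite local check using $V,H\in\{0,1\}$ and the maximal values of $C$ along the grid boundary. Alternatively, one may invoke the Lascoux--Sch\"utzenberger lattice on $\A_n$ under the Bruhat (corner-sum) order, in which $I_n$ is the bottom element; the substance then reduces to identifying each covering relation with a single $T$-block, which is the same local analysis. I expect this existence-of-a-flippable-rectangle step, rather than the surrounding bookkeeping, to be where the genuine difficulty lies.
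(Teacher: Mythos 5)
First, a note on the comparison itself: the paper offers no proof of this statement --- it is quoted as ``essentially Theorem 6.2 of \cite{asbg1}'' --- so your proposal has to stand on its own rather than be measured against an in-paper argument. Its framework is sound and is the standard one for this result: the corner-sum bijection, the computation that adding $T(i_1,j_1,i_2,j_2)$ with $i_1<i_2$, $j_1<j_2$ raises $C$ by exactly $1$ on the rectangle $[i_1,i_2-1]\times[j_1,j_2-1]$, the observation that $I_n$ carries the componentwise-largest corner-sum array, the four boundary conditions for a flippable rectangle (in your notation $V_{ij}=C_{ij}-C_{i-1,j}$, $H_{ij}=C_{ij}-C_{i,j-1}$), and the reversal-plus-monovariant bookkeeping are all correct.

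The genuine gap is the one you flag yourself: the claim that every ASM $A\neq I_n$ admits a flippable rectangle is never proved, and it is not a routine ``finite local check'' bolted onto the bookkeeping --- it is the entire content of the theorem. Worse, the extremal strategies you gesture at fail as stated. Take $A$ to be the $3\times 3$ permutation matrix with rows $(0,0,1)$, $(1,0,0)$, $(0,1,0)$; its corner sums are $C_{11}=C_{12}=0$, $C_{13}=1$, $C_{21}=C_{22}=1$, $C_{23}=2$, $C_{31}=1$, $C_{32}=2$, $C_{33}=3$. The deficient cells (where $C_{ij}<\min(i,j)$) are $(1,1)$, $(1,2)$ and $(2,2)$, all with deficit $1$, so no choice by deficit is available. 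The deficient cell maximizing $i+j$ is $(2,2)$, but the unit flip there is illegal because $V_{22}=C_{22}-C_{12}=1\neq 0$; the cell minimizing $i+j$ is $(1,1)$, but the unit flip there is illegal because $H_{12}=C_{12}-C_{11}=0\neq 1$. Only $(1,2)$ admits a unit flip (it yields the ASM with a $-1$ in the centre). So the obvious extremal choices do not satisfy the four boundary conditions, and identifying a choice that provably always works is exactly the missing argument. Your fallback --- invoking the Lascoux--Sch\"utzenberger lattice --- does not close the gap either: that theorem provides the lattice structure with $I_n$ as bottom element, but it does not identify covering relations (or any descending moves) with single $T$-block additions; establishing that identification is the same deferred local analysis. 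As it stands, the proposal is a correct and well-organized reduction of the theorem to its combinatorial core, but not a proof of it.
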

An extension of Theorem \ref{ASM interchange} to $n\times n \times n$ alternating sign hypermatrices appears in \cite{ashl2}.

In this article, we consider ASMs that differ from a permutation matrix by the addition of a single $T$-block, having the form $P+T$ for a permutation matrix $P$ and $T$-block $T$. We refer to any matrix of this form as a $PT$-matrix. While the class of $PT$-matrices includes all permutation matrices, our attention will be focussed on non-permutation $PT$-matrices. The goal of this article is to identify all ASMs of multiplicative order that are $PT$-matrices, up to permutation similarity and transposition (where the matrices $A$ and $B$ are \emph{permutation similar} if $B=P^TAP$ for a permutation matrix $P$). We note that the properties of being a permutation matrix, a $T$-block, or a $PT$-matrix, are all preserved under conjugation by a permutation matrix. This is not generally true of an ASM however.

In Section \ref{algebra}, we recall some properties of rational matrices of finite multiplicative order. In Section \ref{graphsection}, we introduce the directed graph of a $PT$-matrix and use graph-theoretic considerations to identify candidates for finite multiplicative order. In Section \ref{polynomials}, analysis of the minimum polynomials of matrices determined by these candidate graphs leads to a complete description of $PT$-matrices of finite order. In Section 5, we show that all but a few exceptions are permutation similar to alternating sign matrices.

\begin{comment}
\begin{example}
$P$ from the previous example is permutation-similar to the following matrix.
$$
\left(\begin{array}{rrrr|r}
0 & 0 & 0 & 1 & 0 \\
1 & 0 & 0 & 0 & 0 \\
0 & 1 & 0 & 0 & 0 \\
0 & 0 & 1 & 0 & 0 \\
\hline
0 & 0 & 0 & 0 & 1
\end{array}\right) 
$$
\end{example}
\end{comment}

\section{Rational matrices of finite multiplicative order}\label{algebra}
In this section we recall some relevant properties of the characteristic and minimum polynomials of matrices of finite order in $\GL (n,\Q )$. For information on the minimum polynomial of a matrix, the companion matrix of a polynomial, and related algebraic background, we refer to Chapter 3 of \cite{Jacobson}.

For a positive integer $d$, we write $\Phi_d(x)$ for the $d$th cyclotomic polynomial, the monic polynomial in $\mathbb{Z}[x]$ whose roots are the primitive roots of unity of order $d$ in $\C$. Then $\Phi_d(x)$ is irreducible in $\Q [x]$ and its degree is $\phi (d)$, where $\phi$ denotes the Euler totient function. 

Suppose that $A\in\GL(n,\Q )$ has multiplicative order $t$. Then $A^t-I_n=0$, and so the minimum polynomial $m_A(x)$ of $A$ divides $x^t-1$ in $\Q[x]$. It follows that $m_A(x)$ is a product of distinct cyclotomic polynomials $\Phi_d(x)$, where $d$ runs through a set of divisors of $t$ whose least common multiple is $t$. On the other hand, any matrix whose minimum polynomial has this form does have finite order, equal to the least common multiple of the orders of its roots in $\C^\times$. 

The possible finite orders of elements of $\GL (n,\Q)$ are integers of the form $\lcm (d_1,\dots,d_k)$, where the $d_i$ are positive integers with 
$$
\sum_{i=1}^k \phi(d_i)=n.
$$
For example, the possible finite orders of elements of $\GL (5,\Q)$ are $1, 2, 3, 4, 5, 6, 8, 10, 12$. The possible orders of $n\times n$ permutation matrices are those integers that occur as the least common multiple of the parts in a partition of $n$. In the case $n=5$, these are $1,2,3,4,5$ and $6$.

For the $PT$-matrices of interest in this article, the characteristic polynomial is generally more easily computed than the minimum polynomial. Both have the same irreducible factors, but they may occur with higher multiplicity in the characteristic polynomial. We will identify $PT$-matrices with the property that their characteristic polynomial is a product of cyclotomic factors. For every $d\ge 2$, the polynomial $\Phi_d(x)$ is \emph{palindromic}, meaning that the sequence of its coefficients remains unchanged when reversed, or equivalently that $\Phi_d(x)=x^{\phi(d)}\Phi_d(\frac{1}{x})$. The polynomial $\Phi_1(x)=x-1$ is \emph{skew-palindromic}; reversing the sequence of its coefficients negates each term. Every product of palindromic and skew-palindromic polynomials is itself palindromic or skew-palindromic, according as the number of skew-palindromic factors is even or odd. Thus we may restrict our attention to  $PT$-matrices with palindromic or skew-palindromic characteristic polynomials.

We recall that a square matrix in $\GL(n,\C)$ is diagonalizable in $\GL (n,\C)$ if and only if its minimum polynomial has distinct roots. It follows that every rational square matrix of finite order is diagonalizable over $\C$, since its minimum polynomial divides $x^t-1$ for some $t$. Indeed a matrix whose characteristic polynomial is a product of cyclotomic polynomials has finite multiplicative order if and only if it is diagonalizable. This observation will be useful at times in Sections \ref{graphsection} and \ref{polynomials}.  In a case where the characteristic polynomial has no repeated irreducible factor, the characteristic and minimum polynomials coincide and the matrix is diagonalizable.

Given a monic polynomial $p(x)=x^n+a_{n-1}x^{n-1}+\dots +a_1x+a_0$, we define the \emph{companion matrix} of $p(x)$ to be the $n\times n$ matrix $C$ that has $1$ in the $(i+1,i)$-position for $1\le i\le n-1$, has the entries $-a_0,-a_1,\dots ,-a_{n-1}$ in Column $n$ and has zeros in all other positions. Then $p(C)=0_{n\times n}$ and $p(x)$ is the minimum polynomial (and the characteristic polynomial) of $C$. For a positive integer $k$, we write $C_k$ for the companion matrix of the polynomial $x^k-1$. We note that $C_k$ is a permutation matrix, representing a cycle of length $k$.

\section{Graphs and $(0,1,-1)$-matrices}\label{graphsection}
We associate a 2-arc-coloured directed graph $\Gamma_A$ to a $n\times n$ $(0,1,-1)$-matrix $A$ as follows. The vertex set of $\Gamma_A$ is $\{v_1,\dots ,v_n\}$ and the coloured arcs are as follows:
\begin{itemize}
    \item $(v_i,v_j)$ is a blue arc if $A_{ij}=1$;
    \item $(v_i,v_j)$ is a red arc if $A_{ij}=-1$;
    \item $(v_i,v_j)$ is not an arc if $A_{ij}=0$.
\end{itemize}
The same interpretation of arcs and entries yields an association of a square $(0,1,-1)$-matrix to a given 2-arc-coloured digraph, upon the choice of an ordering of the vertices. Each graph corresponds to a permutation equivalence class of matrices. 

We refer to a 2-arc-coloured digraph that is associated to a $PT$-matrix by the above correspondence, as a \emph{$PT$-graph}. Every $PT$-graph has at most two red arcs, since a $PT$-matrix has at most two negative entries. A $PT$-matrix with no negative entry is a permutation matrix, whose graph consists of disjoint directed cycles, with all arcs coloured blue. A $PT$-matrix with exactly one negative entry has the form $P+T$, where exactly one of the negative entries of the $T$-block $T$ occurs in the same position as a 1 in the permutation matrix $P$. If this occurs in Row $i$, then Row $i$ of $P+T$ is a duplicate of another row, and hence $P+T$ has zero determinant and cannot have finite multiplicative order. Since our interest is in $PT$-matrices that have finite order and are not permutations, we may confine our attention to $PT$-graphs that have exactly two red arcs. While is it possible for a $PT$-graph to have a double blue arc, such a graph corresponds to a $PT$-matrix with an entry equal to 2, which cannot be permutation similar to an ASM. Our concern is thus with $PT$-graphs having exactly two red arcs and no multiple arcs, corresponding to matrices of the form $P+T$, where the positions of non-zero entries in the permutation matrix $P$ and the $T$-block $T$ do not coincide. If $\Gamma$ is such a graph, with $n$ vertices, then its arcset is the disjoint union of a set of four arcs (two of each colour) corresponding to the entries of a $T$-block, and a set of $n$ blue arcs corresponding to the entries of a permutation matrix. These two sets are uniquely determined by the two red arcs. We write $\Gamma_T$ for the subgraph of $\Gamma$ consisting of the four arcs determined by entries of $T$ and their incident vertices, and $\Gamma_P$ for the subgraph similarly determined by the arcs arising from $P$. The vertex set of $\Gamma_P$ is the same as that of $\Gamma$, and its $n$ arcs comprise disjoint directed cycles. The arc set of $\Gamma_T$ is disjoint from that of $\Gamma_P$, and $\Gamma_T$ has one of the following forms.
\begin{center}
    \includegraphics[width=\textwidth]{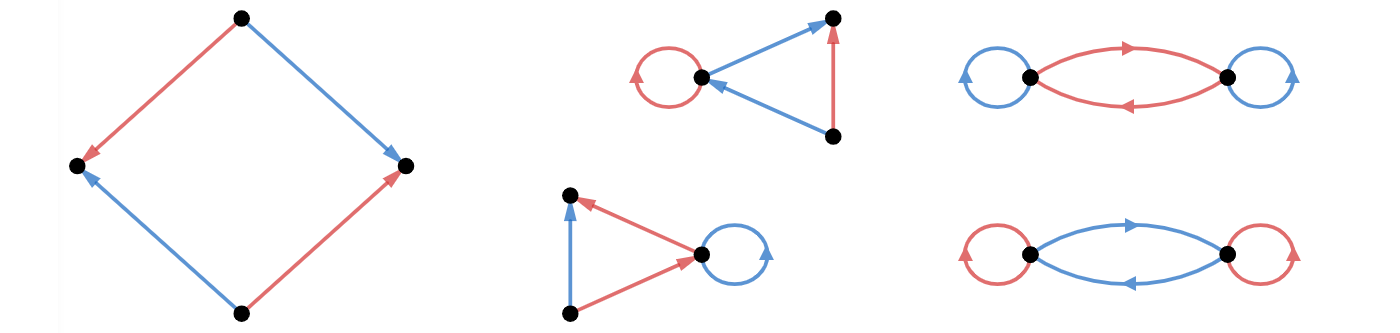}
\end{center}
The graph $\Gamma_T$ is \emph{weakly connected}, meaning that its underlying undirected graph is connected. The weakly connected component of $\Gamma$ that includes $\Gamma_T$ involves arcs from at most four cycles of $\Gamma_P$, since each vertex of $\Gamma_T$ occurs in one cycle of $\Gamma_P$. Since any additional weakly connected components of $\Gamma$ are directed cycles, a $(0,1,-1)$-matrix corresponding to $\Gamma$ has finite multiplicative order if and only if the submatrix corresponding to the weakly connected component that includes $\Gamma_T$ does. For this reason, for the remainder of this section we only consider weakly connected $PT$-graphs. We consider separately the cases where the vertices of $\Gamma_T$ are incident with one, two, three or four cycles of $\Gamma_P$.

The \emph{reverse} of a digraph $\Gamma$ is the graph obtained from $\Gamma$ by reversing the directions of all arcs, while maintaining any arc colouring . The operation of reversing the arc directions in a two-arc-coloured digraph has the effect of transposing the corresponding $(0,1-1)$-matrix. Since the matrix property of having finite multiplicative order is preserved under transposition, this observation is useful in limiting the number of graph types requiring analysis. 

Given any digraph $\Gamma$ with arcs coloured red and blue, we say that a walk in $\Gamma$ is \emph{negative} if it includes an odd number of red arcs (counted with repetition), and \emph{positive} if it includes an even number of red arcs. For a positive integer $k$, we write $w_k^+(u,v)$ and $w_k^-(u,v)$ respectively for the numbers of positive and negative walks of length $k$ (abbreviated to $k$-walks) from the vertex $u$ to the vertex $v$ in $\Gamma$. Let $A$ be the $(0,1,-1)$-matrix determined by the ordering $v_1,\dots ,v_n$ of the vertices of $\Gamma$. It is routine to show that for a positive integer $k$, the entry in the $(i,j)$-position of $A^k$ is
\begin{equation}\label{walks}
w_k^+(v_i,v_j)-w_k^-(v_i,v_j).
\end{equation}
Suppose that $A^k=I_n$, for a positive integer $k$. Then the numbers of positive and negative $k$-walks from $u$ to $v$ in $\Gamma$ coincide, for any pair $u$ and $v$ of distinct vertices. For any vertex $u$, the number of positive $k$-walks from $u$ to $u$ exceeds the number of negative $k$-walks by 1. By applying these observations to directed graphs corresponding to $PT$-matrices, we will be able to reduce to four general classes of weakly connected $PT$-graphs, whose corresponding $PT$-matrices include all examples of finite multiplicative order that are not permutations, up to permutation equivalence and transposition.

\subsection{Type 1: a single cycle} 
We refer to $PT$-graphs and matrices involving a permutation with a single cycle as being of \emph{type 1}.

\includegraphics[width=\textwidth]{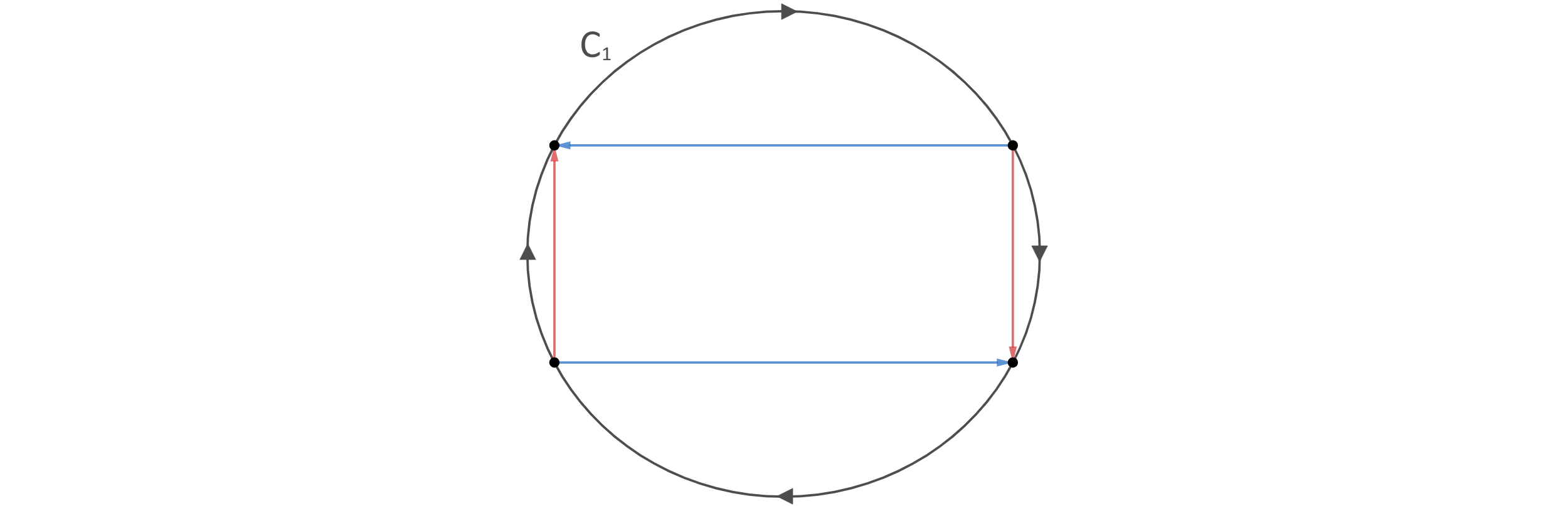}
Necessary and sufficient conditions for a $PT$-matrix of type 1 to have finite multiplicative order are established in Section \ref{type 1}. \\

\subsection{Type 2: a pair of cycles}
The distinct $PT$-graph structures (up to arc reversal, or equivalently, matrix transposition), where the vertices of $\Gamma_T$ belong to exactly two cycles of $\Gamma_P$, are depicted below. We refer to these graphs and their corresponding matrices as \emph{type 2(a)}, \emph{type 2(b)}, \emph{type 2(c)}, and \emph{type 2(d)}, respectively. In the case of type 2(d), the relative positions of the three vertices of $\Gamma_T$ along the directed cycle $C_2$ is not considered to be prescribed.

\includegraphics[width=\textwidth]{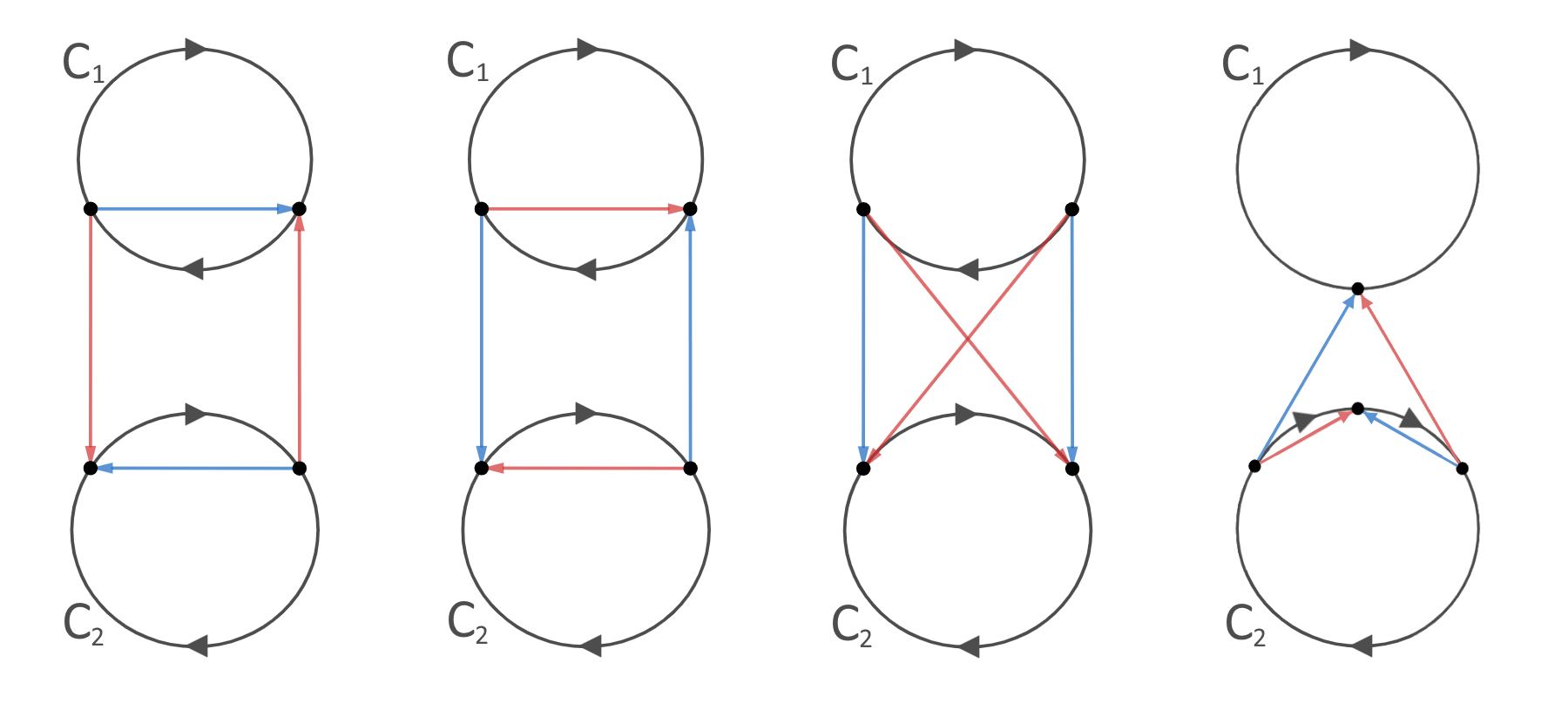}

    \textbf{Type 2(a):} Let $M$ be a matrix corresponding to a graph of type 2(a). Every walk from a vertex of $C_1$ to a vertex of $C_2$ involves an odd number of red arcs, and is therefore a negative walk. Because there is a walk of length $k$ from a vertex in $C_1$ to a vertex in $C_2$ for every $k \geq 1$, this means that there are negative entries in off-diagonal positions of $M^k$ for every $k \geq 1$. Thus matrices of type 2(a) cannot have finite multiplicative order.
    
    \textbf{Type 2(b):} Let $\Gamma$ be a $PT$-graph of type 2(b), where $m_1$ and $m_2$ are the lengths of the cycles $C_1$ and $C_2$ respectively. We may order the vertices of $\Gamma$ so that the corresponding matrix is 
    $$
    A = C_{m_1}\oplus C_{m_2} + T_{1,m_1+k_2; m_1+1,k_1},
    $$
    where $\oplus$ denotes the matrix direct sum. A routine calculation using row operations shows that the characteristic polynomial of the $A$ given by 
    \[p(x) = x^{m_1+m_2}+x^{m_1+m_2-k_1}+x^{m_1+m_2-k_2}-x^{m_1}-x^{m_2}-x^{m_1-k_1}-x^{m_2-k_2}+1\]
    
    If $A$ has finite order, then $p(x)$ must be either palindromic or skew-palindromic.
    \begin{itemize}
        \item If $p(x)$ is skew-palindromic, then the leading coefficient has opposite sign to the constant term. It follows that $x^{m_1-k_1} = x^{m_2-k_2} = x^0$. So $k_1 = m_1$ and $k_2 = m_2$.
        \item If $p(x)$ is palindromic, then  $(m_1+m_2-k_1) + (m_1+m_2-k_2) = m_1+m_2$, so $k_1+k_2 = m_1+m_2$. Because $k_1 \leq m_1$ and $k_2 \leq m_2$, it follows that $k_1=m_1$ and $k_2=m_2$ as above.
    \end{itemize}
    The conditions $k_1=m_1$ and $k_2=m_2$ hold only if the negative entries of the $T$-block in $A$ cancel positive entries of $C_{m_1}\oplus C_{m_2}$, which means that $A=C_{m_1+m_2}$ and in particular $A$ is a permutation matrix. Thus every $PT$-matrix of type 2(b) that has finite order is a permutation matrix.
    
    \textbf{Type 2(c)} and \textbf{Type 2(d)} are considered in Sections \ref{type 2c} and \ref{type 2d}; non-permutation examples of finite order occur in these cases.

\subsection{Type 3: three cycles}
The distinct $PT$-graph structures (up to arc reversal) where the vertices of $\Gamma_T$ belong to three cycles in $\Gamma_P$ are depicted below. We refer to these cases as  \emph{type 3(a)}, \emph{type 3(b)}, and \emph{type 3(c)}, respectively.

\includegraphics[width=0.95\textwidth]{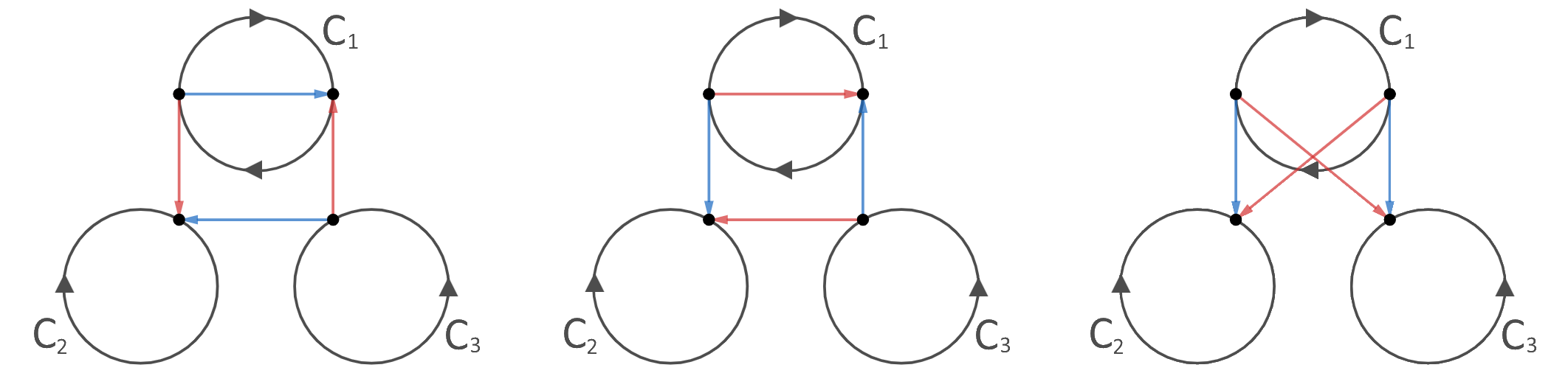}

    \textbf{Type 3(a):} In a $PT$-graph of type 3(a), every walk from a vertex in $C_1$ to a vertex in $C_2$ involves exactly one red arc, and is therefore a negative walk. Since there is a walk of length $k$ from a vertex of $C_1$ to a vertex of $C_2$ for every $k \geq 1$, it follows that negative entires occur in all positive powers of $PT$-matrices of type 3(a). Hence no $PT$-matrix of this type has finite order.

    \textbf{Type 3(b):} If the vertices of a $PT$-graph of type 3(b) are listed with those of the cycle $C_3$ first, followed by those of $C_1$ and then $C_2$, the corresponding $PT$-matrix is block upper-triangular with three square blocks on the diagonal. Subject to a suitable ordering of the vertices of $C_1$, the second diagonal block is the companion matrix of a polynomial of the form $x^m+x^k-1$, where $k<m$. Such a polynomial cannot be palindromic or skew-palindromic and hence cannot be a product of cyclotomic polynomials. Hence  a $PT$-matrix of type 3(b) cannot have finite multiplicative order.

Matrices of \textbf{type 3(c)} are considered in Section \ref{type 3c}.

\subsection{Type 4: four cycles} 
The case where the vertices of $\Gamma_T$ belong to four different cycles of $\Gamma_P$ is depicted below. 

\includegraphics[width=\textwidth]{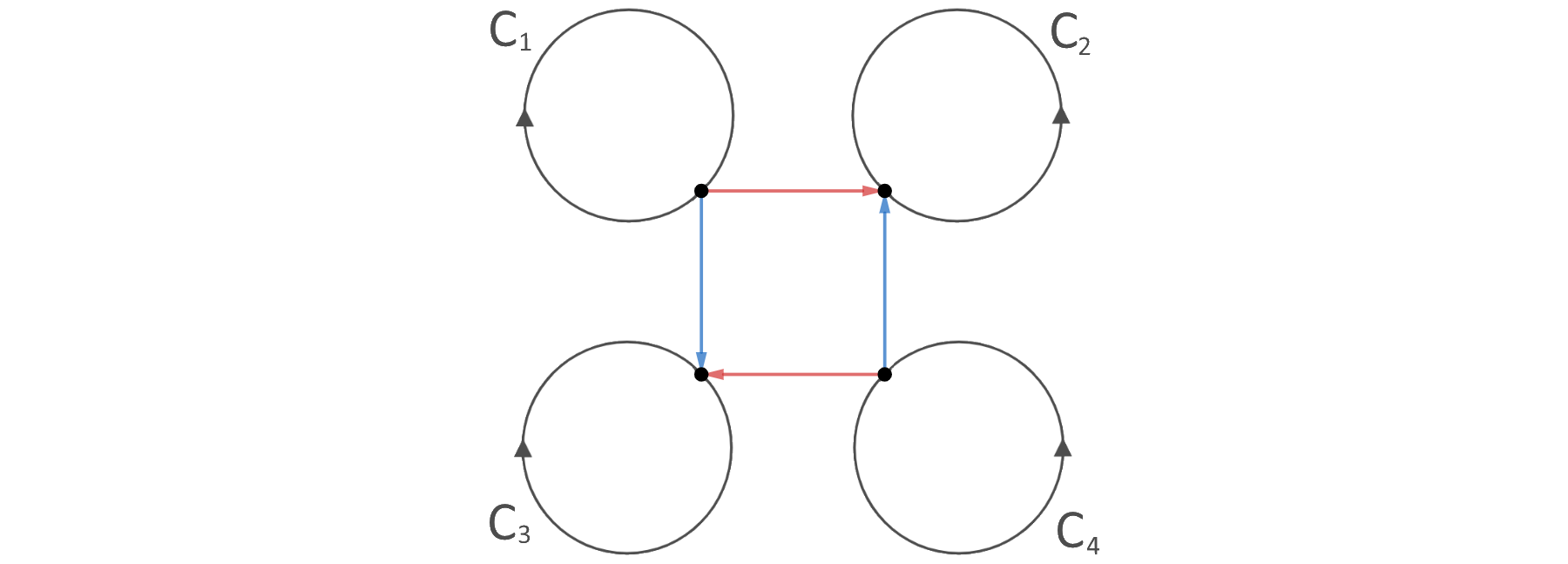}

Every walk from a vertex of $C_1$ in the above graph to a vertex of $C_3$ involves only blue arcs and is therefore positive. Such walks occur of all positive lengths, and so every power of a $PT$-matrix of type 4 has positive off-diagonal entries. Hence no $PT$-matrix of type 4 has finite multiplicative order.

We summarize the conclusions of Section 3 below. 

\begin{theorem}\label{graphs} 
Let $\Gamma$ be a weakly connected $PT$-graph with two red arcs $(u_1,v_1)$ and $(u_2,v_2)$, and with no multiple blue arcs. Let $\Gamma_T$ be the subgraph of $\Gamma$ with vertex set $\{u_1,v_1,u_2,v_2\}$, whose arc set includes the two red arcs and the blue arcs $(u_1,v_2)$ and $(u_2,v_1)$. Let $\Gamma_P$ be the subgraph of $\Gamma$ on the full vertex set, whose arcs are exactly those that do not belong to $\Gamma_T$. Then $\Gamma_P$ is composed of disjoint directed cycles. If the $(0,1,-1)$-matrices corresponding to $\Gamma$ have finite multiplicative order, then either $\Gamma$ or its reverse is of one of the following four types. 
\begin{itemize}
    \item Type 1: $\Gamma_P$ is a single directed cycle.
    \item Type 2(c): The graph $\Gamma_P$ consists of two disjoint directed cycles, with $u_1$ and $u_2$ belonging to one of these cycles and $v_1$ and $v_2$ to the other. The vertices $u_1,u_2,v_1,v_2$ are distinct. 
    \item Type 2(d): The graph $\Gamma_P$ consists of two disjoint directed cycles, with $u_1$ belonging to one of these cycles $u_2,v_1$ and $v_2$ to the other. The vertices $v_1$ and $u_2$ are distinct, but $v_2$ may coincide with one of these.
    \item Type 3(c): The graph $\Gamma_P$ consists of three disjoint directed cycles. The vertices  $u_1$ and $u_2$ belong to the same cycle of $\Gamma_P$, and the other two cycles each includes one of $v_1$ and $v_2$. The vertices $u_1,u_2,v_1,v_2$ are distinct in this case. 
\end{itemize}
\end{theorem}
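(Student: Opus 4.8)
The plan is to prove Theorem \ref{graphs} as the summary of an exhaustive case analysis, organised by the number of cycles of $\Gamma_P$ met by the four vertices of $\Gamma_T$. I would begin by recording the structural setup that the statement itself provides: since $P$ is a permutation matrix, $\Gamma_P$ is a disjoint union of directed cycles on the full vertex set, and its arcs are exactly those of $\Gamma$ outside $\Gamma_T$. As $\Gamma_T$ has four vertices, each lying in a single cycle of $\Gamma_P$, the weakly connected component of $\Gamma$ containing $\Gamma_T$ meets at most four of these cycles; any further component is a lone directed cycle, contributing a permutation direct summand that neither creates nor destroys finite order. Thus it suffices to analyse a weakly connected $\Gamma$ in which the vertices of $\Gamma_T$ lie in exactly one, two, three, or four cycles, giving the four principal cases.

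The combinatorial heart of the argument is then to enumerate, within each case and up to arc reversal (matrix transposition, which preserves finite order), the incidence patterns of the red arcs $(u_1,v_1)$, $(u_2,v_2)$ and the blue arcs $(u_1,v_2)$, $(u_2,v_1)$ against the cycles of $\Gamma_P$. This produces types 2(a)--2(d) for two cycles, types 3(a)--3(c) for three, a single configuration for four, and the type 1 family for one. I expect this enumeration to be the main obstacle: one must show the displayed list is exhaustive, tracking which of the tails $u_1,u_2$ and heads $v_1,v_2$ share a cycle, allowing the permitted vertex coincidences (as in type 2(d)), and discarding duplicates related by reversal.

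With the list fixed, the eliminations fall into two families. For types 2(a), 3(a) and 4 I would apply the walk-parity identity \eqref{walks}: in each there is an ordered pair of distinct cycles between which every walk has a fixed parity---negative for 2(a) and 3(a), positive for 4---while walks exist of arbitrarily large length. Since a finite order $t$ may be replaced by any multiple, some power $A^{t'}=I_n$ would then be forced to have a nonzero off-diagonal entry of that fixed sign, which is impossible. For types 2(b) and 3(b) I would instead invoke Section \ref{algebra}: a rational matrix of finite order has cyclotomic, hence palindromic or skew-palindromic, characteristic polynomial. The type 2(b) characteristic polynomial computed above, and the diagonal block $x^m+x^k-1$ with $0<k<m$ arising in type 3(b), are neither palindromic nor skew-palindromic unless the $T$-block degenerates so as to cancel entries of $P$; this degeneration contradicts the standing hypothesis that the arc sets of $\Gamma_T$ and $\Gamma_P$ are disjoint, so these types are excluded.

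Finally I would observe that precisely types 1, 2(c), 2(d) and 3(c) survive both tests, and read off from their incidence patterns the distinctness conditions on $u_1,u_2,v_1,v_2$ stated in the conclusion. Since Theorem \ref{graphs} asserts only necessity, the complementary verification---that each surviving type actually contains matrices of finite order---is left to the later sections and is not required here.
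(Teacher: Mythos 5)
Your proposal is correct and follows the paper's own argument essentially step for step: the same enumeration of configurations by the number of cycles of $\Gamma_P$ meeting $\Gamma_T$, the same walk-parity elimination of types 2(a), 3(a) and 4 via (\ref{walks}), and the same palindromic/skew-palindromic characteristic-polynomial elimination of types 2(b) and 3(b), with the cancellation degeneracy excluded by the disjointness of the arc sets of $\Gamma_T$ and $\Gamma_P$. The only refinement needed is that the parity argument requires a walk between the relevant cycles of \emph{every} length $k\ge 1$ (which holds, by circling the source cycle before crossing a $T$-arc), not merely of arbitrarily large length, so that the specific power $A^t=I_n$ already exhibits the forbidden off-diagonal entry.
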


\section{Elementary $PT$-matrices of finite multiplicative order}\label{polynomials}

We refer to a $PT$-matrix as \emph{elementary} if its graph is weakly connected. Every $PT$-matrix is permutation similar to the matrix direct sum of an elementary $PT$-matrix and a permutation matrix, so we focus on the elementary case.

In this section we analyse elementary $PT$-matrices of finite order, which correspond to graphs of one of the four types identified in Theorem \ref{graphs}. We establish a classification, up to permutation similarity and transposition, of elementary $PT$-matrices of finite multiplicative order. Examples of finite order exist in all four cases, but their orders can differ from those of permutation matrices of the same size only for Types 1 and 2(d). No generality is lost by restricting to $PT$-matrices with weakly connected graphs, since the addition of a new connected component to a graph is equivalent to extending the corresponding matrix via a direct sum. Central to our analysis is the fact that the characteristic polynomials of $PT$-matrices have a particularly amenable form in the cases of interest. This enables us to identify all $PT$-matrices whose characteristic polynomial is a product of cyclotomic polynomials. As we noted in Section \ref{algebra}, such a matrix has finite multiplicative order if and only if it is diagonalizable. Lemma \ref{diagonalizability} below is the main technical tool that we employ to determine necessary and sufficient conditions for diagonalizability of $PT$-matrices of the four types.

\begin{lemma}\label{diagonalizability}
Let $A$ be a block upper triangular matrix in $M_n(\Q )$, with diagonalizable square $p\times p$ and $q\times q$ blocks $P$ and $Q$ in the upper left and lower right respectively, where $p+q=n$. Let $g(x)$ be the greatest common divisor of the minimum polynomials of $P$ and $Q$ respectively, and let $N$ be the upper right $p\times q$ block of $g(A)$. Then $A$ is diagonalizable if and only if $Nv$ belongs to the columnspace of $g(P)$, for every vector $v$ in the right nullspace of $g(Q)$.
\end{lemma}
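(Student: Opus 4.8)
The plan is to reduce the diagonalizability of $A$ to the vanishing of a single block, and then to translate that vanishing into the stated columnspace condition. Denote by $B$ the upper right $p\times q$ block of $A$. Recall that a rational matrix is diagonalizable over $\C$ precisely when its minimum polynomial is squarefree; since $P$ and $Q$ are diagonalizable, $m_P(x)$ and $m_Q(x)$ are squarefree, and hence so is $\ell(x):=\lcm(m_P,m_Q)$. Because $A$ is block upper triangular, $f(A)$ is block upper triangular with diagonal blocks $f(P)$ and $f(Q)$ for every polynomial $f$; thus $f(A)=0$ forces $m_P\mid f$ and $m_Q\mid f$, so $\ell\mid m_A$. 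Moreover $\ell(A)$ has both diagonal blocks equal to $0$, so $\ell(A)^2=0$ and therefore $m_A\mid \ell^2$. If $m_A$ is squarefree then, dividing $\ell^2$, it divides $\ell$, and combined with $\ell\mid m_A$ this forces $m_A=\ell$. Hence $A$ is diagonalizable if and only if $\ell(A)=0$, and since $\ell(P)=\ell(Q)=0$ this is equivalent to the vanishing of the upper right block of $\ell(A)$.

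Next I would record a product rule for these blocks. Writing $N_f$ for the upper right $p\times q$ block of $f(A)$ (so that $N=N_g$), comparison of blocks in $(fh)(A)=f(A)h(A)$ gives
\[
N_{fh}=f(P)\,N_h+N_f\,h(Q).
\]
Put $g=\gcd(m_P,m_Q)$ and write $m_P=g\,a$ and $m_Q=g\,b$. As $m_P$ and $m_Q$ are squarefree, the polynomials $g,a,b$ are pairwise coprime and $\ell=g\,a\,b=a\,m_Q=b\,m_P$. Applying the product rule first to $\ell=a\,m_Q$ and then to $m_Q=g\,b$, and using $m_Q(Q)=0$ together with $(ag)(P)=m_P(P)=0$, collapses the computation to the key identity
\[
N_\ell=a(P)\,N\,b(Q).
\]
Thus $A$ is diagonalizable if and only if $a(P)\,N\,b(Q)=0$.

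Finally I would identify the two relevant subspaces using diagonalizability of $P$ and $Q$. Since $P$ is diagonalizable with $m_P=g\,a$ and $\gcd(g,a)=1$, the space $\C^p$ splits as $\ker g(P)\oplus\ker a(P)$, on which $g(P)$ acts as zero and invertibly respectively; hence $\image g(P)=\ker a(P)$, that is, the columnspace of $g(P)$ equals the right nullspace of $a(P)$. Symmetrically, from $\C^q=\ker g(Q)\oplus\ker b(Q)$ one obtains $\image b(Q)=\ker g(Q)$. The equation $a(P)\,N\,b(Q)=0$ says exactly that $N$ carries $\image b(Q)$ into $\ker a(P)$; rewriting these spaces through the two identities, this is precisely the condition that $Nv$ lie in the columnspace of $g(P)$ for every $v$ in the right nullspace of $g(Q)$, which is what the lemma asserts.

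The step I expect to be the main obstacle is the collapse $N_\ell=a(P)\,N\,b(Q)$ in the second paragraph: this is the identity that converts the coarse criterion $\ell(A)=0$ into a condition on $N=N_g$ alone. Once it is in hand, the conclusion follows by a clean passage through the primary decompositions of $P$ and $Q$.
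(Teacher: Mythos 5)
Your proof is correct, and while it establishes the same intermediate criterion as the paper, the central computation is carried out by a genuinely different method. Both arguments share the same endpoints: diagonalizability of $A$ is equivalent to $m_A=\lcm(m_P,m_Q)$, hence to the vanishing of the upper-right block of $\ell(A)$, and the final translation uses $\ker a(P)=\mathrm{im}\,g(P)$ (the paper writes $m_P=p\,g$, $m_Q=q\,g$, so its $p,q$ are your $a,b$). The difference is in the middle. The paper evaluates $A'=p(A)q(A)g(A)$ on an explicit basis of $\C^n$ assembled from eigenvectors of $P$ and of $Q$: the vectors coming from $P$, and those coming from eigenvectors of $Q$ whose eigenvalues are roots of $q(x)$, are annihilated automatically, while an eigenvector $v$ of $Q$ for a root of $g(x)$ produces a vector whose upper block is $q(P)p(P)Nv$; invertibility of $q(P)$ then reduces everything to $p(P)Nv=0$. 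You instead stay at the level of blocks: the product rule $N_{fh}=f(P)N_h+N_f h(Q)$ yields the closed-form identity $N_\ell=a(P)\,N\,b(Q)$, and the two primary-decomposition identities $\ker a(P)=\mathrm{im}\,g(P)$ and $\mathrm{im}\,b(Q)=\ker g(Q)$ convert its vanishing into the stated condition. Your route is basis-free, never leaves $\Q$, and treats the whole off-diagonal block at once; the roles played in the paper by ``$q(P)$ is invertible'' and ``eigenvectors of $Q$ for roots of $g$ span $\ker g(Q)$'' are both absorbed into $\mathrm{im}\,b(Q)=\ker g(Q)$. Your opening reduction via the divisibility chain $\ell\mid m_A\mid \ell^2$ is also more self-contained than the paper's, which asserts the corresponding facts about the irreducible factors of $m_A$ directly. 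What the paper's version buys is concreteness: it exhibits exactly which (eigen)vectors in the nullspace of $g(Q)$ obstruct diagonalizability, which matches how the condition is checked against explicit nullspace bases in the applications of Section 4. Finally, the step you flagged as the likely obstacle is fine exactly as you derived it: $N_\ell=a(P)N_{m_Q}$ because $m_Q(Q)=0$, and $N_{m_Q}=g(P)N_b+N\,b(Q)$ with the first term killed by $a(P)g(P)=m_P(P)=0$.
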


\begin{proof}
We write $m_P(x)$, $m_Q(x)$ and $m_A(x)$ respectively for the minimum polynomials of $P,\ Q$ and $A$. Since $P$ and $Q$ are diagonalizable, neither $m_P(x)$ nor $m_Q(x)$ has any repeated irreducible factors. We define the polynomials $p(x)$ and $q(x)$ by $m_P(x)=p(x)g(x)$ and $m_Q(x)=q(x)g(x)$. We note that $\gcd (p(x),q(x))=1$. Since $A$ is diagonalizable if and only if its minimum polynomial has distinct roots, and since the irreducible factors of $m_A(x)$ are exactly those of $m_P(x)$ and $m_Q(x)$, it follows that $A$ is diagonalizable if and only if $m_A(x)=p(x)q(x)g(x)$. 

We now consider under what conditions the matrix product $A'=p(A)q(A)g(A)$ is equal to zero. Since $P$ and $Q$ are diagonalizable, $\C^p$ and $\C^q$ have bases consisting of eigenvectors of $P$ and $Q$ respectively. Thus $\C^n$ has a basis $\{u_1,\dots ,u_p,v_1,\dots ,v_q\}$ where each $u_i$ is an eigenvector of $P$ with $q$ zeros appended, and each $v_i$ is an eigenvector of $Q$ with $p$ zeros prepended. Then $A'u_i=0$ for $1\le i\le p$, since $p(A)u_i=0$. 

The last $q$ entries of each the vectors $v_1,\dots ,v_q$ comprise an eigenvector of $Q$ whose corresponding eigenvalue is a root either of $q(x)$ or $g(x)$. If $v_i$ corresponds to a root of $q(x)$, then $q(A)v_i$ has zeros in its last $q$ positions and $A'v_i=m_P(A)q(A)v_i=0$.

Now let $v$ be a vector in $\{v_1,\dots ,v_q\}$ that corresponds to an eigenvalue of $Q$ that is a root of $g(x)$. Then 
$$
A'v  = q(A)p(A)g(A) = q(A)p(A)\left(\begin{array}{c}
Nv \\ 0_{q\times 1} \end{array}\right)=
\left(\begin{array}{c}
q(P)p(P)Nv \\ 0_{q\times 1} \end{array}\right).
$$
Since no root of $q(x)$ is an eigenvalue of $P$, the matrix $q(P)$ is nonsingular, and $A'v=0$ if and only if $p(P)Nv=0$; that is if and only if the vector $Nv$ belongs to the right nullspace of $p(P)$. Since the minimum polynomial of $P$ is $p(x)g(x)$ and $P$ is diagonalizable, the right nullspace of $p(P)$ is equal to the columnspace of $g(P)$. We conclude that $A$ is diagonalizable if and only if $Nv$ belongs to the columnspace of $g(P)$ for every vector $v$ in the right nullspace of $g(Q)$. 
\end{proof}

The condition of Lemma \ref{diagonalizability} is equivalent to the assertion that the zero eigenvalue of $g(A)$ has full geometric multiplicity, but our analysis will employ the formulation in the lemma. This depends on the feasibility of calculating the entries of $g(A)$ and the right nullspace of $g(Q)$. In most cases of interest, $Q$ is the matrix $C_q$, representing a single $q$-cycle, and $P$ is the companion matrix of a polynomial with few non-zero coefficients. The upper-right $p\times q$ block $M$ of $A$ is a sparse matrix of rank 1, with at most four non-zero entries spread over at most two columns. The polynomial $g(x)$ has the form $x^g\pm 1$ for some integer $g$, so $N$ is the upper right block of $A^g$. This is given by 
$$
P^{g-1}M+P^{g-2}MQ+P^{g-3}MQ^2+\dots +MQ^{g-1}.
$$
The effect of right multplication by $C_q$ on an entry of $M$ is to shift it one step left, or into Column $q$ if it is in Column 1. The effect of left multiplication by a companion matrix $P$ is to shift the entry one step downward, unless it is in Row $q$, in which case the final column of $P$ enters. In most cases of interest, an entry $a$ in position $(i,j)$ of $M$ leads to $g$ appearances of $a$ in $A^g$, in a diagonal pattern of positions starting at $(i,j-g+1)$ and proceeding downwards and to the right. 

We proceed to consider each of the four possible graph types listed in Theorem \ref{graphs}, where the above remarks will apply. 
For a positive integer $t$, we write $[t]_2$ for the highest power of $2$ that divides $t$. We note the following properties of common divisors of polynomials of the form $x^t\pm 1$. 

\begin{lemma}\label{gcd}
Let $s$ and $t$ be positive integers. Then
\begin{itemize}
    \item $\gcd (x^s-1,x^t-1) = x^{\gcd (s,t)}-1.$
    \item $\gcd (x^s+1,x^t+1)=\left\{\begin{array}{cl} x^{\gcd (s,t)}+1 & \mathrm{if}\ [s]_2=[t]_2 \\ 1 & \mathrm{if}\ [s]_2\neq [t]_2\end{array}\right.$
    \item $\gcd (x^s-1,x^t+1)=\left\{\begin{array}{cl}x^{\gcd (s,t)}+1 & \mathrm{if}\ [s]_2>[t]_2 \\ 1 & \mathrm{if}\ [s]_2\le [t]_2 \end{array}\right.$
    
\end{itemize}
\end{lemma}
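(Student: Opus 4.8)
The plan is to work over $\C$ and to exploit that each of the polynomials $x^s-1$ and $x^s+1$ is separable: its derivative $sx^{s-1}$ vanishes only at $0$, which is not a root, so all complex roots are distinct. Since the monic gcd of two polynomials in $\Q[x]$ is computed by the Euclidean algorithm and is therefore unchanged under passage to $\C[x]$, in each of the three cases $\gcd(f,g)$ is simply the monic polynomial whose root set is the intersection of the root sets of $f$ and $g$. Thus every identity reduces to comparing sets of roots of unity, and it suffices to recognize each intersection as the root set of the claimed answer.

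The first identity I would dispatch quickly. The common roots of $x^s-1$ and $x^t-1$ are the $\zeta$ with $\zeta^s=\zeta^t=1$, that is, those whose multiplicative order divides both $s$ and $t$ and hence divides $g:=\gcd(s,t)$; these are exactly the roots of $x^g-1$. (Equivalently, one runs the polynomial Euclidean algorithm, in which reducing $x^s-1$ modulo $x^t-1$ yields $x^{s\bmod t}-1$, mirroring the integer algorithm for $\gcd(s,t)$.)

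For the two remaining identities the central computation is a $2$-adic characterization of the equation $\zeta^s=-1$. Writing $\zeta$ for a primitive $d$-th root of unity, one has $\zeta^s=1$ if and only if $d\mid s$, whereas $\zeta^s=-1$ if and only if $d\mid 2s$ and $d\nmid s$. I would translate the latter pair of conditions into valuations: setting $d=2^a m$ and $s=2^p\sigma$ with $m,\sigma$ odd, the requirements $d\mid 2s$ and $d\nmid s$ hold simultaneously exactly when $a=p+1$ and $m\mid\sigma$; equivalently, $\zeta^s=-1$ precisely when $[d]_2=2[s]_2$ and the odd part of $d$ divides $s$. Establishing this equivalence cleanly is the \textbf{main obstacle}; everything that follows is bookkeeping, supported by the two elementary facts $[g]_2=\min([s]_2,[t]_2)$ and that the odd part of $\gcd(s,t)$ equals the gcd of the odd parts of $s$ and $t$.

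With this characterization in hand, the second identity follows by intersecting: $\zeta^s=\zeta^t=-1$ forces $[d]_2=2[s]_2$ and $[d]_2=2[t]_2$ at once, which is impossible when $[s]_2\neq[t]_2$, giving gcd $1$. When $[s]_2=[t]_2$, the surviving condition is that the odd part of $d$ divides both odd parts, hence their gcd; since then $[g]_2=[s]_2=[t]_2$ and the odd part of $g$ is that common gcd, the intersection is exactly the root set of $x^g+1$. The third identity is analogous: one intersects $\zeta^s=1$ (so $d\mid s$, forcing $[d]_2\le[s]_2$) with $\zeta^t=-1$ (so $[d]_2=2[t]_2$), and these are compatible if and only if $2[t]_2\le[s]_2$, i.e. $[s]_2>[t]_2$. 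In that case $[g]_2=[t]_2$, so $2[g]_2=2[t]_2=[d]_2$, and matching the odd-part conditions shows the common roots are precisely those of $x^g+1$; when $[s]_2\le[t]_2$ no such $d$ exists and the gcd is $1$.
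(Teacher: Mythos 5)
The paper states Lemma \ref{gcd} without any proof, treating it as a standard fact about cyclotomic-type polynomials, so there is no proof of record to compare yours against; the assessment rests on correctness alone, and your argument is correct and complete. The reduction to root sets is legitimate: each of $x^s-1$ and $x^s+1$ is squarefree (its derivative $sx^{s-1}$ vanishes only at $0$, which is not a root), the gcd of two squarefree polynomials over $\C$ is the monic polynomial whose roots are the intersection of the two root sets, and the gcd computed in $\Q[x]$ agrees with the one computed in $\C[x]$ since the Euclidean algorithm is insensitive to field extension. Your central characterization is also right: for $\zeta$ a primitive $d$-th root of unity, $\zeta^s=-1$ holds iff $d\mid 2s$ and $d\nmid s$, which upon writing $d=2^am$, $s=2^p\sigma$ with $m,\sigma$ odd becomes $a=p+1$ and $m\mid\sigma$, i.e.\ $[d]_2=2[s]_2$ together with the odd part of $d$ dividing $s$. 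Combined with the two elementary facts you cite ($[\gcd(s,t)]_2=\min([s]_2,[t]_2)$, and the odd part of $\gcd(s,t)$ equals the gcd of the odd parts), the intersection-of-root-sets computation delivers all three bullets, including the cases where the intersection is empty and the gcd is $1$. The only stylistic remark is that you flag the valuation equivalence as the ``main obstacle'' and then state it rather than derive it in full; the derivation is the one-line check that $2^am\mid 2^{p+1}\sigma$ and $2^am\nmid 2^p\sigma$ force $a=p+1$ and $m\mid\sigma$, so nothing substantive is missing.
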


\subsection{$PT$-matrices of Type 1: a single $n$-cycle}\label{type 1}
We may order the vertices of a $PT$-graph of Type 1 so that its corresponding $PT$-matrix has the form $C_n+T(1,j_1,d+1,j_2)$, where $d\le \frac{n}{2}$. We write $A$ for $C_n+T(1,j_1,d+1,j_2)$ and observe (using cofactor expansion on the first row), that the characteristic polynomial $p(x)$ of $A$ has the following simple form, with at most six non-zero terms. For an integer $t$, we write $[t]$ for the remainder on dividing $t$ by $n$. 

\begin{theorem}\label{char poly 1}
For $d\le\frac{n}{2}$, the characteristic polynomial of $A=C_n+T(1,j_1,d+1,j_2)$ is
  $$
p(x)=x^n-x^{n-j_1}+x^{[n-j_1+d]}+x^{n-j_2}-x^{[n-j_2+d]}-1.
$$
\end{theorem}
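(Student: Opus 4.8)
The plan is to exploit the fact that the $T$-block is a rank-one matrix and to compute the characteristic polynomial through the adjugate form of the matrix determinant lemma, which I find cleaner than the cofactor expansion suggested in the statement. Writing $e_i$ for the standard basis vectors, the block $T(1,j_1,d+1,j_2)$ equals $uv^T$ with $u=e_1-e_{d+1}$ and $v=e_{j_1}-e_{j_2}$, so $A=C_n+uv^T$ and $xI_n-A=(xI_n-C_n)-uv^T$. Applying the identity $\det(M-uv^T)=\det M-v^T\,\mathrm{adj}(M)\,u$, which holds for \emph{every} square matrix $M$ (it follows from the fact that a rank-one perturbation makes the determinant affine in the perturbation parameter, together with Jacobi's formula, so no invertibility of $M$ is needed), with $M=xI_n-C_n$ reduces the problem to
\[
p(x)=\det(xI_n-C_n)-v^T\,\mathrm{adj}(xI_n-C_n)\,u.
\]
Since $C_n$ is the companion matrix of $x^n-1$, the first term is exactly $x^n-1$.

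For the second term I would use the explicit resolvent of $C_n$. Because $C_n^n=I_n$, the telescoping identity $(xI_n-C_n)\sum_{k=0}^{n-1}x^{n-1-k}C_n^k=(x^n-1)I_n$ shows that $\mathrm{adj}(xI_n-C_n)=\sum_{k=0}^{n-1}x^{n-1-k}C_n^k$. As $C_n$ represents the cycle $e_j\mapsto e_{j+1}$ with indices read modulo $n$, the power $C_n^k$ has a $1$ in position $(a,b)$ precisely when $a-b\equiv k\pmod n$, so the $(a,b)$ entry of the adjugate is $x^{\,n-1-[a-b]}$. Expanding $v^T\,\mathrm{adj}(xI_n-C_n)\,u$ over the four sign combinations coming from $u$ and $v$ then gives
\[
v^T\,\mathrm{adj}(xI_n-C_n)\,u=x^{\,n-1-[j_1-1]}-x^{\,n-1-[j_1-d-1]}-x^{\,n-1-[j_2-1]}+x^{\,n-1-[j_2-d-1]}.
\]

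The remaining work is the modular bookkeeping needed to put these exponents into the stated form, and this is the only point where I expect any friction, though it is routine. Since $1\le j_i\le n$ we have $[j_i-1]=j_i-1$, so the two terms with second index $1$ simplify directly to $x^{n-j_1}$ and $x^{n-j_2}$. For the terms with second index $d+1$, the exponent $n-1-[j_i-d-1]$ lies in $\{0,\dots,n-1\}$ and is congruent to $n-j_i+d$ modulo $n$, so it equals $[n-j_i+d]$; confirming this amounts to the harmless case split according to whether $j_i\le d$ or $j_i>d$ (the normalization $d\le\frac n2$ is not actually needed here, only $d+1\le n$). Substituting back yields $v^T\,\mathrm{adj}(xI_n-C_n)\,u=x^{n-j_1}-x^{[n-j_1+d]}-x^{n-j_2}+x^{[n-j_2+d]}$, whence $p(x)=x^n-x^{n-j_1}+x^{[n-j_1+d]}+x^{n-j_2}-x^{[n-j_2+d]}-1$, as claimed. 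Both sides are polynomials, so even though the determinant lemma is most transparent away from the roots of $x^n-1$, the resulting identity holds for all $x$. (The author's hinted cofactor expansion along the first row would reach the same formula, but it requires tracking several cases for the placement of the $T$-block entries relative to the first row and the diagonal, which the rank-one viewpoint sidesteps.)
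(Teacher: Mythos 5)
Your proof is correct, and it takes a genuinely different route from the paper's. The paper does not give a detailed argument at all: it simply asserts the formula ``using cofactor expansion on the first row,'' which, if written out, requires tracking the positions of the four $T$-block entries relative to Row 1 and the subdiagonal through several cases. You instead exploit the rank-one factorization $T(1,j_1,d+1,j_2)=(e_1-e_{d+1})(e_{j_1}-e_{j_2})^T$ and the adjugate form of the matrix determinant lemma, combined with the closed form $\mathrm{adj}(xI_n-C_n)=\sum_{k=0}^{n-1}x^{n-1-k}C_n^k$, whose $(a,b)$ entry is $x^{\,n-1-[a-b]}$. Every step checks out: the telescoping identity does pin down the adjugate (the uniqueness is because $xI_n-C_n$ is invertible over the function field $\Q(x)$ --- worth one explicit sentence); the four-term expansion of $v^T\,\mathrm{adj}(xI_n-C_n)\,u$ is right; and the exponent bookkeeping $n-1-[j_i-d-1]=[n-j_i+d]$ follows, exactly as you say, because both sides lie in $\{0,\dots ,n-1\}$ and are congruent modulo $n$, so no case split on $j_i\le d$ versus $j_i>d$ is even needed. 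Your route buys three things: a single uniform computation with no case analysis; the structural insight that $p(x)$ is the resolvent of $C_n$ paired against the rank-one perturbation; and the observation (which you make and which is accurate) that the hypothesis $d\le\frac{n}{2}$ plays no role in this theorem --- only $1\le d\le n-1$ is needed, the bound $d\le\frac{n}{2}$ being merely the paper's normalization for the later classification, available up to transposition and relabelling. What the paper's cofactor route buys is self-containedness, at the cost of the casework you correctly anticipate. One small caveat: your parenthetical justification of the determinant lemma for singular $M$ (affineness plus Jacobi's formula) is vaguer than necessary and also superfluous here, since $M=xI_n-C_n$ is invertible over $\Q(x)$, so the elementary identity $\det(M-uv^T)=\det(M)\left(1-v^TM^{-1}u\right)$ suffices, and an equality of polynomials holding in $\Q(x)$ holds in $\Q[x]$.
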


We wish to determine when $p(x)$ is a product of cyclotomic polynomials. We begin by considering its constant term, which may differ from $-1$ only if either $j_1$ or $j_2$ is equal to either $n$ or $d$. The only such case in which the constant term is $1$ or $-1$ is when $j_1=d$ and $j_2=n$. In this case $A$ is a permutation matrix, corresponding to a pair of cycles of lengths $d$ and $n-d$. 
 
We assume now that neither $j_1$ nor $j_2$ is equal to $n$ or $d$. Since its constant term is $-1$, the polynomial $p(x)$ of Theorem \ref{char poly 1} can be a product of cyclotomic polynomials only if it is skew-palindromic. This occurs in the following two cases. 
\begin{itemize}
    \item[\textbf{Case 1}] 
    $n-j_1+n-j_2=n$ and $[n-j_1+d]+[n-j_2+d]=n$. \\
    From the first equation, $j_1+j_2=n$, so the second equation reduces to $[j_2+d]=[j_1+d]=n$, which means that $j_1+j_2+2d$ is a multiple of $n$. Since $j_1+j_2=n$ and $d\le\frac{n}{2}$, this can be satisfied only if $2d=n$. We note that $|j_1-j_2|$, which is the distance between the columns occupied by entries of $T$, is even in this situation. \\
    In this case, $\Gamma (A)$ consists of a directed cycle of length $n=2d$, which we write as
    $$
    v_1\to v_n\to v_{n-1}\to\dots \to v_{2}\to v_1,
    $$
    with additional blue and red arcs from $v_1$ and $v_{\frac{n}{2}+1}$ to $v_{j_1}$ and $v_{j_2}$, corresponding to the entries of $T$, where $j_1\neq j_2$ and $j_1-j_2$ is even. It remains to identify the values of $j_1$ and $j_2$ for which $A$ has finite order. 
    \item[\textbf{Case 2}]
    $n-j_1+[n-j_1+d]=n$ and $n-j_2+[n-j_2+d]=n$. \\
    This occurs only if $\{n+d-2j_1,n+d-2j_2\}=\{0,n\}$. This means that $d$ and $n$ are both even and $\{j_1,j_2\}=\{\frac{d}{2},\frac{n+d}{2}\}$. In particular, $|j_1-j_2|=\frac{n}{2}$. \\
    In this situation we may label the vertices so that the graph $\Gamma(A)$ consists of a directed $n$-cycle on blue arcs as in Case 1 above, with four additional arcs corresponding to the entries of $T$, directed from $v_1$ and $v_{d+1}$ to some $v_j$ and $v_{j+\frac{n}{2}}$, where $d$ is even. 
    \end{itemize}
    Reversing all arcs in a graph arising in Case 2 results in a graph of the type described in Case 1. It follows that every $PT$-matrix arising in Case 2 above is permutation similar to the transpose of one that arises in Case 1. For this reason, we consider Case 1 only. 

We now write $j$ for the minimum of $j_1$ and $j_2$, and write $n=2d$. 
\begin{itemize}
    \item If $j_1<j_2$, then $j=j_1$, $A=C_n+T(1,j,d+1,n-j)$, and $$p(x)= 
x^n-x^{n-j}-x^{d+j}+x^{d-j}+x^j-1 = (x^j-1)(x^{d-j}-1)(x^d-1).$$
\item
If $j_1>j_2$, then $j=j_2$, $A=C+T(1,n-j,d+1,j)$, and 
$$
p(x)=x^n+x^{n-j}+x^{d+j}-x^{d-j}-x^j-1 =(x^j+1)(x^{d-j}+1)(x^d-1).
$$
\end{itemize}

In any case where  $p(x)$ is a product of distinct cyclotomic factors, we can conclude that the matrix has finite multplicative order. In the case where $p(x)$ is a product of cyclotomic factors with repetition, we need to consider the relationship between $p(x)$ and the minimum polynomial $m(x)$ of $A$. To this end we consider the minimal $A$-invariant subspace of $\C^n$ that contains the vector $v_1$, which has 1 in position 1, $-1$ in position $d+1$, and zeros elsewhere. This vector spans the $1$-dimensional column space of $T$. 

For $i=1,\dots ,d$, we write $v_i$ for the vector in $\C^{n}$ that has $1$ in position $i$, $-1$ in position $d+i$ and zeros elsewhere. We write $V$ for the span of the $v_i$ which clearly has dimension $d$ and consists of all vectors in $\C^n$ of the form $\left(\begin{array}{r} v \\ -v\end{array}\right)$, where $v\in\C^d$. It is evident that $V$ is $A$-invariant, since it is $C_n$-invariant, and $Tx\in\langle v_1\rangle\subseteq V$ for all $x\in\C^n$. Moreover, $C_n^i v_1=v_{i+1}$, for $i=1,\dots ,d-1$, and $C_n^d v_1=-v_1$. We note that $Av_1=v_2+\alpha v_1$, where $\alpha\in\{-1,0,1\}$. Applying $A$ repeatedly, it follows for $i\le d-1$ that $A^iv_1=v_{i+1}+w$, where $w$ is a linear combination of $v_1,\dots ,v_i$. In particular, $\Bcal_1=\{v_1,Av_1,A^2v_1,\dots ,A^{d-1}v_1\}$ is a linearly independent set and a basis of $V$. It follows that the restriction to $V$ of the linear transformation determined by $A$ is non-derogatory; its minimum polynomial has degree $d$. We extend $\Bcal_1$ to a basis $\Bcal$ of $\C^n$ by appending the standard basis vectors $e_{d+1},\dots ,e_n$. 
Rewriting with respect to the basis $\Bcal$, we find that $A$ is similar to the matrix $A'$ with the following block upper triangular form.
\begin{itemize} 
\item
The lower right $d\times d$ block of $A'$ is $C_d$, the companion matrix of $x^d-1$. That $Ae_i\in e_{i+1}+V$ is clear for $i=d+1,\dots ,n-1$, and $Ae_n=e_1=v_1+e_{d+1}$, since the last column of $A$ is just $e_1$.
\item 
The upper right $d\times d$ block of $A'$ has only zero entries outside its first row. In the first row, the entry in the $(1,n-j)$ position of $A'$ is $1$ or $-1$ (according as $A_{1,n-j}$ is positive or negative), and the entry in the $(1,n)$-position is $1$. All other entries in this region are zeros.
\item
The lower left $d\times d$ block of $A'$ is $0_{d\times d}$.
\item
The upper left $d\times d$ block of $A'$ is the companion matrix of the minimum polynomial of the restriction to $V$ of the linear transformation determined by $A$. This is $\frac{p(x)}{x^d-1}$, which is $(x^j-1)(x^{d-j}-1)$ if $A_{1j}=1$, or $(x^j+1)(x^{d-j}+1)$ if $A_{1j}=-1$.
\end{itemize}
Since $(x^j-1)(x^{d-j}-1)$ has $1$ as a repeated root, the upper left block of $A'$ can have finite order only if $A_{1j}=-1$. In this situation, the block has finite order if and only if the polynomials $x^j+1$ and $x^{d-j}+1$ are relatively prime, which occurs if and only if $[j]_2\neq [d-j]_2$, as noted in Lemma \ref{gcd}. We assume that this condition holds, so that the upper left block of $A'$ has finite order. 

Since $[j]_2\neq [d-j]_2$, it follows that $[d]_2 = \min ([j]_2,[d-j]_2)$ and hence that $x^d-1$ is relatively prime to both $x^j+1$ and $x^{d-j}+1$. Thus $A=C_{2d}+T(1,2d-j,d+1,j)$ has finite multiplicative order in any case where $j<d$ and $[j]_2\neq [d-j]_2$. Since the minimum polynomial of $A$ in this situation is $(x^j+1)(x^{d-j}+1)(x^d-1)$, the order is $\lcm(2j,2d-2j,d)$.

For $PT$-graphs and $PT$-matrices of Type 1, we have the following conclusions. The $PT$-graph $\Gamma$ is defined here as in Theorem \ref{graphs}. For vertices $u$ and $v$ of $\Gamma$, $d_P(u,v)$ denotes the length of the path from $u$ to $v$ along the cycle $\Gamma_P$. 

\begin{theorem}\label{Type 1 graph} 
  Let $\Gamma$ be a $PT$-graph of type 1 of order $n$, with red arcs $(u_1,v_1)$ and $(u_2,v_2)$. Then the $(0,1,-1)$-matrix corresponding to $\Gamma$ (with respect to a vertex ordering) has finite multiplicative order if and only if $n=2d$ is even, the 2-parts of the integers $d_P(v_1,u_1)+1$ and $d_P(u_2,v_1)-1$ are different, $d_P(v_1,u_1)+d_P(v_2,u_2)=d-2$, and either $d_P(u_1,u_2)=d$ or $d_P(v_1,v_2)=d$. 
\end{theorem}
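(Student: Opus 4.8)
The plan is to recast the algebraic classification already obtained in this section in the purely combinatorial terms of the statement. The analysis preceding the theorem shows that, after ordering the vertices to realise the canonical form $C_n+T(1,j_1,d+1,j_2)$ with $d\le\frac n2$, a Type~1 matrix can have finite order only if its characteristic polynomial (Theorem~\ref{char poly 1}) is skew-palindromic; up to arc reversal this forces the Case~1 configuration $n=2d$, $A=C_{2d}+T(1,2d-j,d+1,j)$ with $j=\min(j_1,j_2)<d$, and it is established there that such a matrix has finite order exactly when $[j]_2\neq[d-j]_2$. Thus the whole task is to translate the facts ``$n=2d$'', ``the red arcs occupy the canonical Case~1 positions'', and ``$[j]_2\neq[d-j]_2$'' into intrinsic distances along $\Gamma_P$, and conversely to show that the four displayed conditions force this configuration.

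For the forward direction I would first fix the orientation of $\Gamma_P$ dictated by $C_n$, namely $v_1\to v_n\to v_{n-1}\to\cdots\to v_2\to v_1$, so that $d_P(v_a,v_b)=(a-b)\bmod n$ for vertices indexed $a,b$. Reading the red arcs of $A=C_{2d}+T(1,2d-j,d+1,j)$ as $(u_1,v_1)$ from position $1$ to position $j$ and $(u_2,v_2)$ from position $d+1$ to position $2d-j$, a direct computation gives $d_P(v_1,u_1)=j-1$, $d_P(u_2,v_1)=d+1-j$, $d_P(v_2,u_2)=d-j-1$, and $d_P(u_1,u_2)=d$. Substituting these values, the second condition becomes $[j]_2\neq[d-j]_2$, the third becomes $(j-1)+(d-j-1)=d-2$, and the fourth holds through its first disjunct, so every finite-order Case~1 matrix satisfies all four conditions.

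For the converse I would run the dictionary in reverse. Assuming $n=2d$, the disjunct $d_P(u_1,u_2)=d$ of the fourth condition places the two red-arc tails at antipodal cycle positions, which is exactly the freedom used to put the matrix in canonical form with both $T$-rows at positions $1$ and $d+1$. Writing the two red-arc heads as positions $a$ and $b$, the third condition then reads $(a-1)+\big((b-d-1)\bmod n\big)=d-2$, and a short analysis of the modular term shows this forces $a+b=n$ with $b\ge d+1$, i.e.\ precisely the Case~1 placement; the alternative placement, which is the discarded subcase in which $\Phi_1$ appears as a repeated factor, makes the modular term wrap around and leaves the third condition with no solution. The second condition now supplies the coprimality $[j]_2\neq[d-j]_2$ with $j=a$, so the matrix is one of the classified finite-order examples. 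The remaining disjunct $d_P(v_1,v_2)=d$ of the fourth condition is handled by passing to the reverse graph, where it becomes the first disjunct; this is legitimate because finite order is preserved under transposition, exactly as Case~2 was reduced to Case~1 above.

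The step I expect to be the main obstacle is the orientation and labelling bookkeeping rather than any new idea. Because the two red arcs carry no preferred order, and because every expression mixes head-to-tail distances measured around a single directed cycle, I must check that $d_P(v_1,u_1)+1$ and $d_P(u_2,v_1)-1$ are well defined under interchanging the labels of the two arcs and that they reproduce the pair $\{j,\,d-j\}$ in each admissible configuration. I must also verify carefully that the third condition, through the modular wrap-around, is precisely what excludes the subcase $A_{1j}=1$ where the characteristic polynomial acquires a repeated factor $x-1$. These verifications amount to routine modular arithmetic on the cycle, but they are where all the sign and direction conventions must be reconciled, and they form the only genuinely delicate part of the argument.
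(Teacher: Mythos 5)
Your overall route is the same as the paper's: read the theorem off the canonical-form classification (finite order iff the matrix, in canonical form, is $C_{2d}+T(1,2d-j,d+1,j)$ with $[j]_2\neq[d-j]_2$), verify the four conditions by computing cycle distances in that form, and handle the disjunct $d_P(v_1,v_2)=d$ by arc reversal. Your Case-1 computations are correct, and your converse for the disjunct $d_P(u_1,u_2)=d$ is sound: placing $u_1,u_2$ at antipodal positions $1$ and $d+1$, the condition $d_P(v_1,u_1)+d_P(v_2,u_2)=d-2$ forces both summands to be at most $d-2$, hence forces the heads to positions $a$ and $2d-a$ with $a\le d-1$; the discarded subcase $A_{1j}=+1$ (repeated factor $\Phi_1$) has $d_P(v_1,u_1)=a-1\ge d$ and is indeed excluded by that same condition.

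The genuine gap is exactly the check you postponed, and it does not come out true. Reversal exchanges heads and tails, so condition two applied to the reverse of $\Gamma$ is a statement about $d_P(v_1,u_1)$ and $d_P(u_1,v_2)$, not about $d_P(u_2,v_1)$; condition two is therefore not reversal-invariant, and your reduction of the second disjunct to the first proves a different statement. In fact the equivalence, read literally, is false for second-disjunct graphs, so no argument can close this gap. Take the transpose of the paper's own order-$60$ example $C_{10}+T(1,8,6,2)$: its graph has the blue cycle $1\to 2\to\cdots\to 10\to 1$ and red arcs $(u_1,v_1)=(2,1)$, $(u_2,v_2)=(8,6)$, and its matrix has order $60$. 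Conditions one, three and four hold, since $d_P(v_1,u_1)+d_P(v_2,u_2)=1+2=3=d-2$ and $d_P(v_1,v_2)=5=d$; but $d_P(v_1,u_1)+1=2$ and $d_P(u_2,v_1)-1=3-1=2$ have equal $2$-parts, and interchanging the labels of the two red arcs gives the pair $(3,3)$. The repair, which your own computation points to, is to replace $d_P(u_2,v_1)-1$ by $d_P(v_2,u_2)+1$: in the canonical Case-1 configuration both equal $d-j$ (which is presumably how the formulation in the statement arose), but only the latter is invariant under reversal and under relabelling of the two arcs once condition three is imposed. With that emended condition your plan — forward computation, converse via condition three, reversal for the second disjunct — does go through; with the condition as stated, your proof breaks precisely at the deferred verification, a defect it shares with the statement itself.
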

The two versions of the final condition in the statement of Theorem \ref{Type 1 graph} correspond to the cases where $\Gamma$ itself, or its reverse, is described by a matrix having the form in the above discussion.

\begin{theorem}\label{Type 1 matrix} 
A $n\times n$ $PT$-matrix $A$ of Type 1 has finite multiplicative order if and only if $n$ is even and either $A$ or its transpose is permutation similar to a matrix of the form 
$$
C_n+T(1,n-j,d+1,j),
$$
where $n=2d,\ j<d$ and $[j]_2\neq [d-j]_2$. In this case the multiplicative order of $A$ is $\lcm (2j,2d-2j,d)$. 
\end{theorem}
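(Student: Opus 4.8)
The plan is to assemble the characteristic- and minimum-polynomial analysis carried out above into the stated equivalence, proving the two directions separately and finishing with the order computation. After reordering the vertices of the single cycle, every non-permutation Type 1 $PT$-matrix is permutation similar to $A = C_n + T(1,j_1,d+1,j_2)$ with $d \le \frac{n}{2}$, so it suffices to settle finite order for matrices of this normal form. By Section \ref{algebra}, $A$ has finite multiplicative order if and only if its characteristic polynomial $p(x)$ is a product of cyclotomic polynomials \emph{and} $A$ is diagonalizable, and $p(x)$ is given explicitly by Theorem \ref{char poly 1}.

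For the forward direction I would first exploit the shape of $p(x)$. Since it has at most six terms and, in the genuine non-permutation case where no entry of $T$ cancels an entry of $C_n$, constant term $-1$, a product of cyclotomic factors must be skew-palindromic. Matching exponents forces one of the two configurations recorded above, each of which requires $n = 2d$ even; as Case 2 is the arc-reversal, hence transpose, of Case 1, it suffices to treat Case 1, where $j_1 + j_2 = n$ and $p(x)$ factors as $(x^j \mp 1)(x^{d-j}\mp 1)(x^d-1)$ with $j = \min(j_1,j_2)$ and sign governed by whether $j_1 < j_2$. This shows $p(x)$ is cyclotomic but leaves diagonalizability open.

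Resolving diagonalizability is the crux and the main obstacle, because $p(x)$ may carry repeated cyclotomic factors, so the characteristic polynomial being cyclotomic is necessary but not sufficient. I would use the $A$-invariant subspace $V = \{(v,-v)^T : v \in \C^d\}$: the vectors $v_1, Av_1, \dots, A^{d-1}v_1$ form a basis of $V$ on which $A$ acts non-derogatorily, and appending $e_{d+1},\dots,e_n$ exhibits $A$ as similar to a block upper triangular matrix whose lower-right block is $C_d$ and whose upper-left block is the companion matrix of $p(x)/(x^d-1)$. This quotient equals $(x^j-1)(x^{d-j}-1)$ when $j_1 < j_2$ and $(x^j+1)(x^{d-j}+1)$ when $j_1 > j_2$. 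The former has $1$ as a repeated root, so its companion block, and hence $A$, is non-diagonalizable; this rules out the $j_1 < j_2$ branch and forces $A = C_n + T(1,n-j,d+1,j)$. Diagonalizability of the surviving block then requires $x^j+1$ and $x^{d-j}+1$ to be coprime, which by Lemma \ref{gcd} is exactly $[j]_2 \ne [d-j]_2$.

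To finish, I would note that $[j]_2 \ne [d-j]_2$ gives $[d]_2 = \min([j]_2,[d-j]_2)$, so by Lemma \ref{gcd} the factor $x^d - 1$ is coprime to both $x^j + 1$ and $x^{d-j}+1$; the two diagonalizable blocks then have coprime minimum polynomials, and $A$ is diagonalizable (the trivial $g(x)=1$ case of Lemma \ref{diagonalizability}) with minimum polynomial $(x^j+1)(x^{d-j}+1)(x^d-1)$, a product of distinct cyclotomic factors. This simultaneously settles the reverse direction, since under the stated hypotheses this minimum polynomial has distinct roots and $A$ therefore has finite order. For the order, I would compute the multiplicative orders of the companion matrices of $x^j+1$, $x^{d-j}+1$ and $x^d-1$ as $2j$, $2(d-j)$ and $d$ respectively, whence $A$ has order $\lcm(2j, 2d-2j, d)$. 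Throughout, the point demanding care is keeping the characteristic and minimum polynomials distinct: the invariant-subspace reduction is precisely what eliminates the deceptive $j_1 < j_2$ branch that survives the cyclotomic test but fails diagonalizability.
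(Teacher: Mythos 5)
Your proposal is correct and follows essentially the same route as the paper's own argument: the same normal form and characteristic polynomial from Theorem \ref{char poly 1}, the same skew-palindromic case analysis reducing to Case 1 via transposition, the same invariant subspace $V$ and block triangularization with lower-right block $C_d$ and upper-left block the companion matrix of $p(x)/(x^d-1)$, and the same use of Lemma \ref{gcd} to rule out the $(x^j-1)(x^{d-j}-1)$ branch and to obtain coprimality of the surviving factors. Your explicit appeal to the trivial $g(x)=1$ case of Lemma \ref{diagonalizability} merely formalizes a step the paper states without citation, so the two proofs coincide in substance.
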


It is possible for a $n\times n$ $PT$-matrix of Type 1 to have a multiplicative order that does not occur as the order of a permutation matrix in $\S_n$. For example if $n=10$, choosing $j=1$ or $j=2$ gives $PT$-matrices (shown below) whose respective multiplicative orders are $\lcm (2,8,5)=40$ and $\lcm (4,6,5)=60$. Neither 40 nor 60 occurs as the order of an element in the symmetric group $S_{10}$, since neither occurs as the least common multiple of the integers in a partition of 10. 
$$
\begin{array}{c}
\left(\begin{array}{rrrrrrrrrr}
-1 & 0 & 0 & 0 & 0 & 0 & 0 & 0 & 1 & 1 \\
 1 & 0 & 0 & 0 & 0 & 0 & 0 & 0 & 0 & 0 \\ 
 0 & 1 & 0 & 0 & 0 & 0 & 0 & 0 & 0 & 0 \\
 0 & 0 & 1 & 0 & 0 & 0 & 0 & 0 & 0 & 0 \\
 0 & 0 & 0 & 1 & 0 & 0 & 0 & 0 & 0 & 0 \\
 1 & 0 & 0 & 0 & 1 & 0 & 0 & 0 & -1 & 0 \\
 0 & 0 & 0 & 0 & 0 & 1 & 0 & 0 & 0 & 0 \\
 0 & 0 & 0 & 0 & 0 & 0 & 1 & 0 & 0 & 0 \\
 0 & 0 & 0 & 0 & 0 & 0 & 0 & 1 & 0 & 0 \\
 0 & 0 & 0 & 0 & 0 & 0 & 0 & 0 & 1 & 0 \\
\end{array}\right) \\
 \ \\
C_{10}+T(1,9,6,1), \ \mathrm{order}\ 40
\end{array} \ \ \ \ 
\begin{array}{c}
\left(\begin{array}{rrrrrrrrrr}
 0 & -1 & 0 & 0 & 0 & 0 & 0 & 1 & 0 & 1 \\
 1 & 0 & 0 & 0 & 0 & 0 & 0 & 0 & 0 & 0 \\ 
 0 & 1 & 0 & 0 & 0 & 0 & 0 & 0 & 0 & 0 \\
 0 & 0 & 1 & 0 & 0 & 0 & 0 & 0 & 0 & 0 \\
 0 & 0 & 0 & 1 & 0 & 0 & 0 & 0 & 0 & 0 \\
 0 & 1 & 0 & 0 & 1 & 0 & 0 & -1 & 0 & 0 \\
 0 & 0 & 0 & 0 & 0 & 1 & 0 & 0 & 0 & 0 \\
 0 & 0 & 0 & 0 & 0 & 0 & 1 & 0 & 0 & 0 \\
 0 & 0 & 0 & 0 & 0 & 0 & 0 & 1 & 0 & 0 \\
 0 & 0 & 0 & 0 & 0 & 0 & 0 & 0 & 1 & 0 \\
\end{array}\right) \\
 \ \\
C_{10}+T(1,8,6,2), \ \mathrm{order}\ 60
\end{array}
$$
It will be shown in Section \ref{ASM section} that the $PT$-matrices arising in Theorem \ref{Type 1 matrix} are all permutation similar to alternating sign matrices.

\subsection{Type 2(c): two cycles connected by four arcs of $T$}\label{type 2c}

In a $PT$-graph of Type 2(c) in Theorem \ref{graphs}, the permutation component consists of a pair of cycles of lengths $p$ and $q$ respectively, where $p+q=n$ and each of $p$ and $q$ is at least 2. The $T$-component  contributes four additional arcs, a pair of red arcs directed from distinct vertices $x_1$ and $x_2$ of the $p$-cycle to distinct vertices $y_1$ and $y_2$ respectively of the $q$-cycle, and a pair of blue arcs from $x_1$ to $y_2$ and from $x_2$ to $y_1$. We assume that $y_1$ and $y_2$ are labelled so that the directed path along the $q$-cycle from $y_2$ to $y_1$ is no longer that the one from $y_1$ to $y_2$. 

We order the vertices of $\Gamma$ as follows. We begin with the vertices of the $p$-cycle, starting with $x_1$ and proceeding against the direction of the arcs in $C$. We continue with the vertices of the $q$-cycle, proceeding against the direction of the arcs of the cycle, to end with $y_2$.

With respect to this ordering, the $n\times n$ matrix $A$ of $\Gamma$ has the following description, where $\oplus$ denotes the matrix direct sum.
$$
A = (C_p\oplus C_q)+T(1,n,h+1,n-l),
$$
where $h$ is the length of the path from $x_2$ to $x_1$ in the $p$-cycle, and $l$ is the length of the path from $y_2$ to $y_1$ in the $q$-cycle. We derive conditions on $h,l,p$ and $q$, for $A$ to have finite order. Since $A$ is block upper triangular with $C_p$ and $C_q$ as its diagonal blocks, it has finite order if and only if its minimum polynomial is $\lcm (x^p-1,x^q-1)$, and in this case its order is $\lcm (p,q)$. If this occurs, then neither $p$ nor $q$ can be a divisor of the other, since inspection of walks of length $\max (p,q)$ from vertices of $C$ to vertices of $C'$ shows that $A^{\max(p,q)}$ has non-zero entries in its upper right $p\times q$ region. For example, there is at least one positive $q$-walk from $u$ to the in-neighbour of $v'$ in $C'$, and no negative one. There is at least one positive $p$-walk from the out-neighbour of $v$ in $C$ to $u'$, and no negative one. Thus it is not possible that $A$ has order $p$ or $q$, and we may confine our attention to cases where neither $p$ nor $q$ divides the other.

We write $g$ for $\gcd (p,q)$ and note that $q\le\frac{q}{2}$, and so $n-l\ge p+g$, since $l\le\frac{q}{2}$. We apply Lemma \ref{diagonalizability} with $g(x)=\gcd (x^p-1,x^q-1)=x^g-1$. Then
\begin{equation}\label{A^g}
g(A)=A^g-I_n = (g(C_p)\oplus g(C_q))+\sum_{i=1}^g T(i,[h+i],n-g+i,n-l-g+i),
\end{equation}
where $[h+i]=h+i-p$ if $h+i$ exceeds $p$, and is otherwise equal to $h+i$. In place of the 1 in the upper right position of $A$, $A^g$ (or $A^g-I_n$) has a strip of $g$ entries equal to 1, in a diagonal arrangement from position $(1,n-g+1)$ to position $(g,n)$. A similar pattern occurs for each of the three other non-zero entries in the upper right $p\times q$ block of $A$.

We write $N$ for the upper right block of $g(A)$. By Lemma \ref{diagonalizability}, we need to consider whether $Nv$ belongs to the column space $U$ of $g(C_p)$, for every vector $v$ satisfying $g(C_q)v=0$. The column space of $C_p^g-I_p$ consists of all vectors $u\in\C^p$ for which the sum of the $\frac{p}{g}$ entries $u_i,u_{i+g},\dots ,u_{i+(\frac{p}{g}-1)g}$ is zero, for $i=1,\dots g-1$. 

A basis for the right nullspace of $g(C_q)$ is given by $\{w_1,\dots ,w_g\}$, where $w_i$ has entries equal to 1 in the $\frac{q}{g}$ positions with indices congruent to $i$ modulo $g$, and zeros elsewhere.

According to (\ref{A^g}), the non-zero columns of $N$ occur in two (possibly overlapping) contiguous bands, from Column $q-g+1$ to Column $n$, and from Column $q-l-g+1$ to Column $q-l$ of $N$. The nonzero entries of $N$ occur as follows, where $1\le i\le g$. 
\begin{itemize}
\item
Column $q-l-g+i$ of $N$ has $-1$ in position $i$ and 1 in position $[h+i]$. 
\item 
Column $q-g+i$ has $1$ in position $i$ and $-1$ in position $[h+i]$. 
\end{itemize}
It follows that $Nv_i$ is either equal to Column $q-g+i$ of $N$ (if $i\le g-l$) or to the sum of Columns $q-g+i$ and $q-l-g+\overline{l+i}$ of $N$, where $\overline{l+i}$ is the reminder on dividing $l+i$ by $g$. This sum is zero if $g|l$, otherwise it is the vector with $1$ in positions $i$ and $[\overline{l+i}+h]$, $-1$ in positions $\overline{l+i}$ and $[i+h]$, and zeros elsewhere. This vector belongs to the columnspace of $g(P)$ only if $g|h$. It now follows from Lemma \ref{diagonalizability} that $A$ has finite multiplicative order if and only if $g$ divides either $l$ or $h$, giving the following conclusions for $PT$-graphs and $PT$-matrices of type 2(c), as described in Theorem \ref{graphs}. 

\begin{theorem}\label{2c graph}
Let $\Gamma$ be a $PT$-graph of type 2(c), in which the two cycles have lengths $p$ and $q$, the vertices $u_1$ and $u_2$ belong to the $p$-cycle, and the vertices $v_1$ and $v_2$ belong to the $q$-cycle. Then a $(0,1,-1)$-matrix corresponding to $\Gamma$ has finite multiplicative order if and only if at least one of $d_P(u_1,u_2)$ and $d_P(v_1,v_2)$ is a multiple of $\gcd (p,q)$. 
\end{theorem}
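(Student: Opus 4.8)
The plan is to reduce the graph-theoretic statement to the purely arithmetic criterion "$g$ divides $h$ or $g$ divides $l$" isolated in the discussion preceding the theorem, where $g=\gcd(p,q)$, and then to translate the matrix parameters $h$ and $l$ back into the path-length invariants $d_P(u_1,u_2)$ and $d_P(v_1,v_2)$. First I would fix the vertex ordering described above, writing the matrix as $A=(C_p\oplus C_q)+T(1,n,h+1,n-l)$. With the identification $u_1=x_1,\ v_1=y_1,\ u_2=x_2,\ v_2=y_2$ coming from the red arcs $(u_1,v_1)$ and $(u_2,v_2)$, the parameter $h$ is the length of the directed path from $u_2$ to $u_1$ along the $p$-cycle and $l$ is the length of the directed path from $v_2$ to $v_1$ along the $q$-cycle; that is, $h=d_P(u_2,u_1)$ and $l=d_P(v_2,v_1)$.

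Next I would establish the arithmetic criterion itself, which is the analytic heart of the argument. Since $A$ is block upper triangular with diagonal blocks $C_p$ and $C_q$, both diagonalizable with minimum polynomials $x^p-1$ and $x^q-1$, the matrix $A$ has finite order if and only if it is diagonalizable, so Lemma \ref{diagonalizability} applies with $g(x)=\gcd(x^p-1,x^q-1)=x^g-1$ (Lemma \ref{gcd}). The computation then has three ingredients: the upper-right block $N$ of $g(A)=A^g-I_n$, whose four nonzero diagonal strips of $\pm 1$ are read off from (\ref{A^g}); the explicit description of the column space of $g(C_p)$ as the vectors whose entries in each residue class modulo $g$ sum to zero; and the basis $\{w_1,\dots,w_g\}$ of the right nullspace of $g(C_q)$, with $w_i$ supported on indices congruent to $i$ modulo $g$. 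Checking when $Nw_i$ lies in the column space of $g(C_p)$, which I would organize by tracking the residues modulo $g$ of the row indices of the nonzero entries of $Nw_i$, yields exactly the condition that $g\mid l$ or $g\mid h$.

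Finally I would carry out the translation to graph invariants. Because the two directed paths between $u_1$ and $u_2$ along the $p$-cycle have lengths summing to $p$, we have $d_P(u_1,u_2)+d_P(u_2,u_1)=p$, and since $g\mid p$ this gives $g\mid h=d_P(u_2,u_1)$ if and only if $g\mid d_P(u_1,u_2)$; the identical reasoning on the $q$-cycle gives $g\mid l$ if and only if $g\mid d_P(v_1,v_2)$. This also confirms that the stated condition is independent of how the two vertices in each pair are labelled, since interchanging $u_1$ and $u_2$ merely exchanges the two complementary path lengths. Combining, $A$ has finite order if and only if $g\mid d_P(u_1,u_2)$ or $g\mid d_P(v_1,v_2)$, as claimed. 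I expect the main obstacle to be the bookkeeping in the membership test: correctly locating the nonzero entries of $Nw_i$ and deciding whether they lie in the column space of $g(C_p)$, where the distinct roles of $h$ and $l$ and the wrap-around in the index $[h+i]$ must be handled with care; everything else is a direct appeal to the lemmas or elementary arithmetic using $g\mid p$ and $g\mid q$.
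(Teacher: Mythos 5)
Your proposal is correct and takes essentially the same route as the paper: the same vertex ordering giving $A=(C_p\oplus C_q)+T(1,n,h+1,n-l)$, the same appeal to Lemma \ref{diagonalizability} with $g(x)=\gcd(x^p-1,x^q-1)=x^{g}-1$ via Lemma \ref{gcd}, and the same test of whether $Nw_i$ lies in the column space of $g(C_p)$, yielding the criterion that $g$ divides $h$ or $l$. Your final step, making explicit that $h=d_P(u_2,u_1)$ and $l=d_P(v_2,v_1)$ and that $g\mid p$, $g\mid q$ allow these to be replaced by $d_P(u_1,u_2)$ and $d_P(v_1,v_2)$, is left implicit in the paper but is exactly the intended translation.
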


\begin{theorem}\label{2c matrix}
Let $A$ be a $PT$-matrix of type 2(c). Then $A$ has finite multiplicative order if and only if $A$ or its transpose is permutation similar to the matrix $C_p\oplus C_q+T(1,n,h+1,n-l)$, where $1\le h< p,\ 1\le l<q$, and $\gcd (p,q)$ divides at least one of $h$ and $l$. When this occurs, the order of $A$ is $\lcm (p,q)$.
\end{theorem}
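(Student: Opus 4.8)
The plan is to obtain Theorem~\ref{2c matrix} as the matrix reformulation of Theorem~\ref{2c graph}, supplemented by a short eigenvalue computation of the order. Throughout write $g=\gcd(p,q)$. First I would pin down the canonical form: using the vertex ordering described above (starting at $x_1$ and traversing each cycle against the direction of its arcs), every Type 2(c) matrix is permutation similar to $(C_p\oplus C_q)+T(1,n,h+1,n-l)$ with $1\le h<p$ and $1\le l<q$, and replacing $A$ by its transpose---which reverses all arcs and preserves finite order---lets me assume the two red arcs run from the $p$-cycle to the $q$-cycle. This collapses both directions of the biconditional to a single canonical family, so it suffices to decide finite order for these matrices and to compute the order when it is finite.

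Next I would apply the diagonalizability machinery. The matrix $A=(C_p\oplus C_q)+T(1,n,h+1,n-l)$ is block upper triangular with diagonal blocks $C_p$ and $C_q$; these are permutation matrices, hence diagonalizable with minimum polynomials $x^p-1$ and $x^q-1$, and the characteristic polynomial of $A$ is $(x^p-1)(x^q-1)$, a product of cyclotomic factors. By the observations of Section~\ref{algebra}, $A$ has finite order if and only if it is diagonalizable, so I may invoke Lemma~\ref{diagonalizability} with $g(x)=\gcd(x^p-1,x^q-1)=x^{g}-1$ (Lemma~\ref{gcd}). The core of the argument is then the computation prepared above: the upper-right block $N$ of $g(A)$ is obtained from the four entries of $T$ by the expansion $\sum_{i=0}^{g-1}C_p^{\,g-1-i}\,M\,C_q^{\,i}$, where $M$ is the upper-right block of $A$ and each entry spreads into a diagonal strip; a basis of the right nullspace of $g(C_q)$ is given by the indicator vectors $w_1,\dots,w_g$ of the residue classes modulo $g$; and the column space of $g(C_p)$ consists of the vectors whose coordinate sums over each residue class modulo $g$ vanish. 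Determining when every $Nw_i$ lies in this column space yields the criterion that $g$ divides at least one of $h$ and $l$, which is exactly the stated condition. I expect this column-space bookkeeping to be the main obstacle, both because it requires careful tracking of how the strips of entries of $N$ interact with the residue-class structure, and because the cases in which $g$ is close to (or equal to) $p$ or $q$ make the column space or the nullspace degenerate and so deserve separate verification.

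A minor point I would treat explicitly is the gap between the normalisation $l\le\frac{q}{2}$ used in the derivation and the range $1\le l<q$ in the statement: since $g\mid q$, we have $g\mid l$ if and only if $g\mid(q-l)$, so interchanging the two heads of the red arcs (which replaces $l$ by $q-l$) leaves the criterion unchanged, and the condition is therefore independent of the labelling convention.

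Finally I would compute the order. When the criterion holds, $A$ is diagonalizable with characteristic polynomial $(x^p-1)(x^q-1)$, so its minimum polynomial is the squarefree polynomial $\lcm(x^p-1,x^q-1)$, the product of the distinct cyclotomic factors $\Phi_d$ with $d\mid p$ or $d\mid q$. By the description of finite orders in Section~\ref{algebra}, the order of $A$ is the least common multiple of these $d$; since $d=p$ and $d=q$ both occur, this equals $\lcm(p,q)$, as claimed.
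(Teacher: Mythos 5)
Your proposal takes essentially the same route as the paper's proof: the same canonical form $C_p\oplus C_q+T(1,n,h+1,n-l)$ obtained by ordering each cycle against its arcs and using transposition to fix the direction of the red arcs, the same reduction of finite order to diagonalizability, the same application of Lemma \ref{diagonalizability} with $g(x)=x^{\gcd(p,q)}-1$, the same residue-class descriptions of the right nullspace of $g(C_q)$ and the columnspace of $g(C_p)$, and the same identification of the order via the minimum polynomial $\lcm(x^p-1,x^q-1)$. The one step you assert rather than execute---that each $Nw_i$ lies in the columnspace of $g(C_p)$ exactly when $\gcd(p,q)$ divides $h$ or $l$---is where the paper's proof does its detailed work (each $Nw_i$ is either a single column of $N$ or a sum of two columns from the two bands of width $\gcd(p,q)$, and that sum vanishes when $\gcd(p,q)\mid l$ and otherwise lies in the columnspace only when $\gcd(p,q)\mid h$), but since you have set up exactly the right objects, including the degenerate cases you flag for separate treatment, this is a matter of execution rather than a missing idea.
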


Every matrix arising in Theorem \ref{2c matrix} is similar to a permutation matrix. Those that are permutation similar to alternating sign matrices will be identified in Section \ref{ASM section}.

\subsection{Type 2(d): two cycles connected by two arcs of $T$}\label{type 2d}
We now consider $PT$-matrices corresponding to graphs of Type 2(d), in the classification given in Theorem \ref{graphs}. The matrix of Example \ref{finite} is of this type, with the underlying permutation involving a $4$-cycle and a fixed point. Let $\Gamma$ be a $PT$-graph of order $n=p+q$, consisting of disjoint directed cycles of lengths $p$ and $q$, whose arcs are coloured blue, and the following four additional arcs involving vertices $x_1,x_2,y_1$ of the $p$-cycle (with $x_1\neq x_2$) and a vertex $y_2$ of the $q$-cycle: $(x_1,y_1)$ and $(x_2,y_2)$, both coloured red, and blue arcs $(x_1,y_2)$ and $(x_2,y_1)$. 

We order the vertices of $\Gamma$ as follows. Vertices of the $p$-cycle are listed first, ordered against the direction of the arcs of the cycle, and ending with $y_1$. Vertices of the $q$-cycle follow, again against the direction of the arcs, and with $y_2$ appearing last. 

With respect to this ordering, the matrix $A$ of $\Gamma$ has the form 
$$
A=(C_p\oplus C_q)+T(i_1,n,i_2,p),
$$
where $i_1$ and $i_2$ are the (distinct) respective positions of $x_1$ and $x_2$ in the vertex ordering. We write $P$ for the upper left $p\times p$ block of $A$ and note that $P$ is the companion matrix of the polynomial 
$$
p(x)=x^p+x^{i_1-1}-x^{i_2-1}-1.
$$
Since $A$ is block upper triangular with $P$ as its upper-left block, $A$ may have finite multiplicative order only if $P$ does. The polynomial $p(x)$ cannot be palindromic, since its leading and constant coefficients cannot coincide. It is skew-palindromic only if $i_1+i_2=p+2$. We assume that this holds and rewrite $i_2$ as $i$. Then 
$$
p(x)=(x^{p-i+1}-1)(x^{i-1}+1),
$$
and, by Lemma \ref{gcd}, $P$ has finite order if and only if $[i-1]_2\ge [p-i+1]_2$. We assume that this condition holds, and hence that the order of $P$ is the least common multiple of $p-i+1$ and $2(i-1)$. We proceed to consider when $A$ has finite order. 

First we consider the case where $[q]_2>[i-1]_2$. We write $d$ and $g$ respectively for $\gcd (\frac{q}{2},i-1)$ and $\gcd (q,p-i+1)$. Then the greatest common divisor of the minimum  polynomials of $P$ and $C_q$ is $m(x)=(x^d+1)(x^g-1)$. 

By Lemma \ref{diagonalizability}, a necessary and sufficient condition for $A$ to be diagonalizable, or equivalently to have finite multiplicative order, is that the zero eigenvalue has full geometric multiplicity in the matrix $(A^d+I_n)(A^g-I_n)$. Since the polynomials $x^d+1$ and $x^g-1$ are relatively prime, this condition holds if and only if it holds separately for $A^d+I_n$  and $A^g-I_n$. We consider $A^d+I_n$ first. The right nullspace of $C_q^d+I_q$ has dimension $d$ and is spanned by the vectors $u_1,\dots ,u_d$, where for $i=1,\dots ,d$, $u_i$ has entries alternating between $1$ and $-1$ in positions $i,\ i+d,\ i+2d$ to $q-d+i$, starting with $1$ in position $i$. The columnspace of $P^d+I_p$ has dimension $p-d$, and is spanned by the vectors $w_1,\dots ,w_{p-d}$ in $\C^p$, where $w_i$ has 1 in positions $i$ and $i+d$, and zeros elsewhere. This space consists of all vectors in $\C^p$ with the property that for $i=1,\dots ,d$, the alternating sum of the sequence of entries in positions congruent to $i$ modulo $d$ is zero. 

The non-zero entries of the matrix $N_d$ are confined to the last $d$ columns. Since $u_i$ has $-1$ in position $q-d+i$ and its last $d$ entries are otherwise zero, the vectors $N_du_i$ are respective scalar multiples of the last $d$ columns of $N_d$. Thus $A^d+I_n$ satisfies the condition of Lemma \ref{diagonalizability} if and only if the columnspace of $N_d$ is contained in that of $P^d+I_p$. Column $q-d+1$ of $N_d$ is equal to column $q$ of the upper right block of the original $A$. It has two non zero entries; $-1$ and $1$ in positions $i$ and $p+2-i$. This vector $v$  belongs to the columnspace of $P^d+I$ if and only if $d$ divides $p+2-2i$ and $(p+2-2i)/d$ is even. This condition is equivalent to the statement that $d|p$ and $p/d$ is even, since $d$ divides $i-1$. Every column of $N_d$ is a linear combination of vectors with two non-zero entries of opposite sign, whose positions are separated either by $|p+2-2i|$ or by $2i-2$. Thus the condition that $p/d$ is an even integer, which is necessary to ensure that $v$ belongs to the columnspace of $P^d+I_p$, is also sufficient to ensure that $N_du$ belongs to this space, for every $u$ in the right nullspace of $Q^d+I$. 

We now consider the corresponding question for the matrix $A^g-I_n$. The analysis and conclusion here closely mirror those of Section \ref{type 2c}. The column space of $P^g-I_p$ has dimension $p$ and consists of all vectors in $\C^p$ with the property that for each $i\in\{1,\dots ,g\}$ the sum of all entries in positions congruent to $i$ modulo $g$ is zero. If $N_g$ is the upper right $p\times q$ block of $A^g-I_n$, then every column of $N_g$ occurs as the product $N_g u$ for some $u$ with $(Q^g-I_q)u=0$. As above, Column $q-g+1$ of $N_g$ has only two non-zero entries, of opposite sign and separated by a vertical distance of $|p+2-2i|$. This vector belongs to the column space of $P^g-I_p$ only if $g$ divides $p+2-2i$, and as above this condition is sufficient to ensure that the column space of $N_g$ is contained in that of $P^g-I_p$. Since $g$ is a divisor of $p-i+1$, the condition $g|p-2i+2$ is equivalent to $g|i-1$ and hence to $g|p$. 

\begin{theorem}\label{2d}
The $PT$-matrix $A=(C_p\oplus C_q)+T(i_1,n,i_2,p)$, where $i_1,i_2\le p$, has finite multiplicative order if and only if the following conditions are satisfied.
\begin{itemize}
    \item $i_1+i_2=p+2$, and $[i_2-1]_2\ge [p-i_2+1]_2$;
    \item $g|p$, where $g=\gcd (q,p-i_2+1)$;
    \item $[q]_2\le [i_2-1]_2$, \emph{or} $[q]_2>[i_2-1]_2$ and $p/d$ is an even integer, where $d=\gcd (\frac{q}{2},i_2-1)$.
\end{itemize}
\end{theorem}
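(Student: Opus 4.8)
The plan is to assemble the three conditions from the block-triangular structure of $A$ and two applications of Lemma \ref{diagonalizability}, as set up in the discussion preceding the statement. First I would use that $A=(C_p\oplus C_q)+T(i_1,n,i_2,p)$ is block upper triangular with diagonal blocks $P$ and $C_q$, where $P$ is the $p\times p$ companion matrix of $x^p+x^{i_1-1}-x^{i_2-1}-1$. Since $f(A)$ is block upper triangular with diagonal blocks $f(P)$ and $f(C_q)$ for every polynomial $f$, the minimum polynomial of $P$ divides that of $A$, so $A$ can have finite order only if $P$ does. Being a companion matrix, $P$ has finite order exactly when its (characteristic and minimum) polynomial is a product of distinct cyclotomic factors, hence palindromic or skew-palindromic; the palindromic case is impossible because the leading and constant coefficients are $+1$ and $-1$, and skew-palindromicity forces $i_1+i_2=p+2$. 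Writing $i=i_2$, the polynomial then factors as $(x^{p-i+1}-1)(x^{i-1}+1)$, and Lemma \ref{gcd} shows it has distinct roots precisely when $[i-1]_2\ge[p-i+1]_2$. This establishes the first bullet as necessary and, under it, identifies the minimum polynomials of the now-diagonalizable blocks $P$ and $C_q$.

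Next, since the characteristic polynomial of $A$ is then the product of cyclotomic polynomials $(x^{p-i+1}-1)(x^{i-1}+1)(x^q-1)$, the remark of Section \ref{algebra} reduces finite order to diagonalizability, which I would decide with Lemma \ref{diagonalizability} using $g(x)=\gcd(m_P(x),x^q-1)$. Computing this gcd with Lemma \ref{gcd} produces the dichotomy of the third bullet: when $[q]_2\le[i-1]_2$ the factor $x^{i-1}+1$ contributes nothing and $g(x)=x^{g}-1$ with $g=\gcd(q,p-i+1)$, while when $[q]_2>[i-1]_2$ one gets $g(x)=(x^{d}+1)(x^{g}-1)$ with $d=\gcd(\tfrac{q}{2},i-1)$ (which equals $\gcd(q,i-1)$ in this regime). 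As $x^{d}+1$ and $x^{g}-1$ are coprime, the right nullspace of $g(C_q)$ and the columnspace of $g(P)$ split as direct sums along these factors, so the criterion of Lemma \ref{diagonalizability} holds if and only if it holds separately for $A^{g}-I_n$ and, in the second regime, for $A^{d}+I_n$.

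The substance of the proof is then the explicit description of the upper-right $p\times q$ blocks $N_g$ and $N_d$ of $A^{g}-I_n$ and $A^{d}+I_n$, obtained by tracking how the four nonzero $T$-entries propagate into diagonal strips under the shift rules for right multiplication by $C_q$ and left multiplication by a companion matrix. For the factor $x^{g}-1$, the vectors $N_g u$ (as $u$ runs over the natural basis of the nullspace of $C_q^{g}-I_q$) are spanned by vectors with two nonzero entries of opposite sign; testing membership in the columnspace of $P^{g}-I_p$ (entries summing to zero on each residue class modulo $g$) reduces to $g\mid p+2-2i$, which, since $g\mid p-i+1$, is equivalent to $g\mid i-1$ and hence to $g\mid p$. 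This gives the second bullet. For the factor $x^{d}+1$, the corresponding test against the columnspace of $P^{d}+I_p$ (alternating sums vanishing on each residue class modulo $d$) additionally demands that $d\mid p+2-2i$ with even quotient; since $d\mid i-1$, this is exactly the requirement that $p/d$ be an even integer, completing the third bullet. Collecting the conditions, with the disjunction reflecting whether the $x^{d}+1$ factor is present, yields the stated equivalence.

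I expect the main obstacle to be the third paragraph: correctly computing $N_g$ and $N_d$, and in particular verifying that the single critical column of each (the one with nonzero entries separated by $|p+2-2i|$) governs membership for the whole columnspace, so that the necessary condition extracted from that column is also sufficient. The arithmetic conversions $g\mid p+2-2i\Leftrightarrow g\mid p$ and ``$d\mid p+2-2i$ with even quotient'' $\Leftrightarrow p/d$ even are then short, once $g\mid p-i+1$ and $d\mid i-1$ are recorded.
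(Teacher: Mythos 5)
Your proposal follows the paper's proof essentially step for step: the same block-triangular reduction to the companion matrix $P$ of $x^p+x^{i_1-1}-x^{i_2-1}-1$, the same impossibility-of-palindromic/skew-palindromic argument plus Lemma \ref{gcd} for the first bullet, and the same application of Lemma \ref{diagonalizability} with $g(x)=\gcd(m_P(x),x^q-1)$ split into the coprime factors $x^g-1$ and $x^d+1$, where the membership tests on the columns of $N_g$ and $N_d$ (two entries of opposite sign separated by $|p+2-2i|$, with sufficiency of the critical column) give $g\mid p$ and $p/d$ even. The only difference is cosmetic: you write out the regime $[q]_2\le[i_2-1]_2$ explicitly, whereas the paper treats it implicitly by presenting only the case $[q]_2>[i_2-1]_2$.
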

If the conditions in Theorem \ref{2d} hold, then the order of $A$ is 
$$
\lcm (p-i_2+1,2i_2-2,q). 
$$
If $[q]_2\ge [i_2-1]_2$, this is equal to $\lcm (p-i_2+1,i_2-1,q)$, which is the order of a permutation of degree $p+q$. In general however, the order of $A$ need not coincide with that of a permutation. For example, we may set 
$p=i_2=2^k+1$, and $q=1$, for any positive integer $k$. We obtain a $(2^k+2)\times (2^k+2)$ matrix of order $\lcm (1,2^{k+1},1)=2^{k+1}$. An $n\times n$ permutation matrix of order $2^{k+1}$ exists only if $n\ge 2^{k+1}$. 

The $10\times 10$ example with $k=3$ and order 16 is below, along with an ASM to which it is permutation similar.

$$
\left(\begin{array}{rrrrrrrrrr}
 0 & 0 & 0 & 0 & 0 & 0 & 0 & 0 & 1 & 0 \\
 1 & 0 & 0 & 0 & 0 & 0 & 0 & 0 & -1 & 1 \\ 
 0 & 1 & 0 & 0 & 0 & 0 & 0 & 0 & 0 & 0 \\
 0 & 0 & 1 & 0 & 0 & 0 & 0 & 0 & 0 & 0 \\
 0 & 0 & 0 & 1 & 0 & 0 & 0 & 0 & 0 & 0 \\
 0 & 0 & 0 & 0 & 1 & 0 & 0 & 0 & 0 & 0 \\
 0 & 0 & 0 & 0 & 0 & 1 & 0 & 0 & 0 & 0 \\
 0 & 0 & 0 & 0 & 0 & 0 & 1 & 0 & 0 & 0 \\
 0 & 0 & 0 & 0 & 0 & 0 & 0 & 1 & 1 & -1 \\
 0 & 0 & 0 & 0 & 0 & 0 & 0 & 0 & 0 & 1\\
\end{array}\right) \\
 \ \ \ 
\left(\begin{array}{rrrrrrrrrr}
 0 & 0 & 0 & 0 & 0 & 0 & 0 & 1 & 0 & 0\\
 1 & 0 & 0 & 0 & 0 & 0 & 0 & -1 & 1 & 0\\ 
 0 & 1 & 0 & 0 & 0 & 0 & 0 & 0 & 0 & 0\\
 0 & 0 & 1 & 0 & 0 & 0 & 0 & 0 & 0 & 0\\
 0 & 0 & 0 & 1 & 0 & 0 & 0 & 0 & 0 & 0\\
 0 & 0 & 0 & 0 & 1 & 0 & 0 & 0 & 0 & 0\\
 0 & 0 & 0 & 0 & 0 & 1 & 0 & 0 & 0 & 0\\
 0 & 0 & 0 & 0 & 0 & 0 & 0 & 1 & -1 & 1\\
 0 & 0 & 0 & 0 & 0 & 0 & 0 & 0 & 1 & 0\\
 0 & 0 & 0 & 0 & 0 & 0 & 1 & 0 & 0 & 0\\
\end{array}\right) \\
$$

\subsection{Type 3(c): three cycles}\label{type 3c}
The remaining case concerns $PT$-matrices corresponding to graphs of type 3(c) in Corollary \ref{graphs}. The analysis for this resembles that of type 2(c), although it is simpler. A graph of Type 3(c) has three cycles of lengths $p, q$ and $m$. Additionally, it has a pair of blue arcs directed from distinct vertices $x_1$ and $x_2$ of the first cycle, respectively to vertices $y$ and $z$ of the second and third cycles, and a pair of red arcs from $x_1$ to $z$ and $x_2$ to $y$. With respect to a suitable ordering of the vertices, the corresponding matrix is
$$
A = (C_p \oplus C_q\oplus C_m)+T(1,p+q,i,p+q+m),
$$
where $1<i\le p$. Applying Lemma \ref{diagonalizability} as in previous cases, we find that the upper left $(p+q)\times (p+q)$ block $A_{p+q}$ of $A$ has finite multiplicative order if and only if $\gcd(p,q)$ divides $i-1$. We assume that this holds, and note that $A_{p+q}$ is then similar to $C_p\oplus C_q$, via a change of basis that does not affect the first $p$ basis elements. It follows that  $A$ itself is similar to the matrix $A'=(C_p\oplus C_q\oplus C_m)-E_{1,p+q+m}+E_{i,p+q+m}$, where $E_{i,j}$ has 1 in the $(i,j)$-position and zeros elsewhere. Now $A'$ has finite order if and only if the $(p+m)\times (p+m)$ matrix 
$(C_p\oplus C_m)-E_{1,p+m}+E_{i,p+m}$ does, and applying Lemma \ref{diagonalizability} confirms that this occurs if and only if $\gcd (p,m)$ divides $i-1$. 

\begin{theorem}\label{3c matrix}
Let $p,q,m$ be positive integers, with $p\ge 2$, and let $i$ be an integer with $1<i\le p$. The matrix 
$$
A = (C_p \oplus C_q\oplus C_m)+T(1,p+q,i,p+q+m),
$$
which represents a $PT$-graph of Type 3(c), has finite multiplicative order if and only if $\gcd (p,q)$ and $\gcd (p,m)$ both divide $i-1$. 
In this case the order of $A$ is $\lcm (p,q,m)$ and $A$ is similar to the permutation matrix $C_p\oplus C_q\oplus C_m$. 
\end{theorem}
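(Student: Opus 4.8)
The plan is to exploit the block upper-triangular structure of $A$, whose diagonal blocks are the companion matrices $C_p$, $C_q$ and $C_m$. Since the characteristic polynomial of $A$ is the product $(x^p-1)(x^q-1)(x^m-1)$ of cyclotomic polynomials, the observations of Section \ref{algebra} reduce the question of finite multiplicative order to one of diagonalizability. I would first record that the coordinate subspaces $V_{p,q}=\langle e_1,\dots,e_{p+q}\rangle$ and $V_{p,m}=\langle e_1,\dots,e_p,e_{p+q+1},\dots,e_{p+q+m}\rangle$ are both $A$-invariant: the only off-diagonal entries of $A$ lie in columns $p+q$ and $p+q+m$, and in each case the contributing entries of $T$ sit in rows $1$ and $i$, which are at most $p$. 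A direct inspection shows that the restriction of $A$ to $V_{p,q}$ is $(C_p\oplus C_q)+E_{1,p+q}-E_{i,p+q}$ and its restriction to $V_{p,m}$ is $(C_p\oplus C_m)-E_{1,p+m}+E_{i,p+m}$; each is a degenerate instance of the Type 2(c) situation of Section \ref{type 2c}, having a single nonzero off-diagonal column.

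For necessity, I would invoke the standard fact that the restriction of a diagonalizable operator to an invariant subspace is again diagonalizable. Thus if $A$ has finite order, both restrictions above are diagonalizable, and applying Lemma \ref{diagonalizability} to each (exactly as in the Type 2(c) computation, with $g(x)=x^{\gcd(p,q)}-1$ and $g(x)=x^{\gcd(p,m)}-1$ respectively) forces $\gcd(p,q)\mid i-1$ and $\gcd(p,m)\mid i-1$.

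For sufficiency, I would argue in two stages. Assuming $\gcd(p,q)\mid i-1$, Lemma \ref{diagonalizability} gives that $A|_{V_{p,q}}$ is diagonalizable, so the invariant subspace $\langle e_1,\dots,e_p\rangle$, on which $A$ acts as $C_p$, admits an $A$-invariant complement $W$ inside $V_{p,q}$, with $A|_W$ similar to $C_q$. Replacing $e_{p+1},\dots,e_{p+q}$ by a basis of $W$ while keeping $e_1,\dots,e_p$ and $e_{p+q+1},\dots,e_{p+q+m}$ fixed produces a similarity $S\oplus I_m$ whose inverse fixes $e_1,\dots,e_p$; since the only nonzero column of the coupling block between the $C_p$ and $C_m$ blocks is supported on rows $1$ and $i$ (both at most $p$), this change of basis leaves that block untouched. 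The upshot is that $A$ is similar to $A'=(C_p\oplus C_q\oplus C_m)-E_{1,p+q+m}+E_{i,p+q+m}$, in which the $C_q$ summand is completely decoupled. Hence $A'$, and therefore $A$, is diagonalizable precisely when the remaining matrix $(C_p\oplus C_m)-E_{1,p+m}+E_{i,p+m}$ is, and a second application of Lemma \ref{diagonalizability} shows this happens exactly when $\gcd(p,m)\mid i-1$.

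Finally, when both divisibility conditions hold, the decoupled $(p+m)$-block is diagonalizable with characteristic polynomial $(x^p-1)(x^m-1)$ and is therefore similar to $C_p\oplus C_m$; consequently $A\sim A'\sim C_p\oplus C_q\oplus C_m$, a permutation matrix of order $\lcm(p,q,m)$. The main obstacle I anticipate is the decoupling step: one must justify that the similarity rendering $A|_{V_{p,q}}$ block-diagonal can be chosen to fix the first $p$ basis vectors, so that the two interactions can be analysed independently rather than simultaneously. This rests on the existence of an invariant complement to $\langle e_1,\dots,e_p\rangle$, which is exactly what diagonalizability of $A|_{V_{p,q}}$ supplies.
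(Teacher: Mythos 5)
Your proposal is correct and takes essentially the same route as the paper: both reduce finite order to diagonalizability, apply Lemma \ref{diagonalizability} to the degenerate two-cycle blocks to obtain the conditions $\gcd(p,q)\mid i-1$ and $\gcd(p,m)\mid i-1$, and then decouple the $C_q$ summand via a change of basis fixing $e_1,\dots,e_p$ (your invariant complement $W$ is exactly the paper's ``change of basis that does not affect the first $p$ basis elements''), yielding $A\sim A'=(C_p\oplus C_q\oplus C_m)-E_{1,p+q+m}+E_{i,p+q+m}$. The only cosmetic difference is that you get necessity of both divisibility conditions at once by restricting to the two invariant subspaces $V_{p,q}$ and $V_{p,m}$, whereas the paper extracts the second condition only after the decoupling step.
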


\section{ASM-permutability}\label{ASM section}
It remains to determine which of the $PT$-matrices of finite multiplicative order are permutation similar to alternating sign matrices, or \emph{ASM-permutable}. We consider this question separately for the four types, using the following strategy in all cases. In Section 4, we identified a standard form for a $PT$-matrix of each of the four types, selected with ease of calculation of characteristic and minimum polynomials in mind. This amounts to a choice of ordering of the vertices of the corresponding digraph, which we now label as $1,2,\dots n$. We need to determine whether the same $n$ vertices can be rearranged to an \emph{ASM-ordering}, which means that the corresponding $(0,1,-1)$-matrix is an ASM. An ASM-ordering must satisfy four constraints, one arising from each of the four rows and columns in which the matrix has three non-zero entries, which are the rows and columns occupied by entries of the $T$-block. Each of the four constraints stipulates that a particular vertex, labelling the position of the $-1$ in the relevant row or column, must occur between two other vertices, which label the positive entries in the same row or column. The consistency of the four constraints needs to be checked.

\subsection{Type 1}
By Theorem \ref{Type 1 matrix}, every $PT$-matrix of finite order of Type 1 (or its transpose) is permutation similar to a matrix of the form 
$$
A=C_n+T(1,n-j,d+1,j),
$$
where $n=2d$ is even, $1\le j<d$ and $[j]_2\neq [d-j]_2$, which implies that $d\ge 3$. We write $\Gamma$ for the graph determined by the above matrix $A$, and write $1,2,\dots ,n$, for the vertices of $\Gamma$, in the order determined by $A$. An ordering of the vertices of $\Gamma$ is an ASM-ordering if and only if it satisfies the following conditions, determined respectively by Rows $1$ and $d+1$ of $A$, and by Columns $j$ and $n-j$.
\begin{enumerate}
    \item $j$ occurs between $n-j$ and $n$;
    \item $n-j$ occurs between $j$ and $d$;
    \item $1$ occurs between $d+1$ and $j+1$;
    \item $d+1$ occurs between $1$ and $n-j+1$.
    \end{enumerate}
The vertices $j,d,n-j$ and $n$ that appear in the first two conditions are distinct. The first two conditions imply that in any ASM-ordering, these four occur either in the order $d,n-j,j,n$ or $n,j,n-j,d$. Since the reverse of an ASM-ordering is an ASM-ordering, we may concentrate on the former case, and consider whether the remaining $n-4$ vertices can be inserted so that the conditions arising from Columns $j$ and $n-j$ also hold. 

The vertices $1,j+1,d+1$ and $n-j+1$ are distinct, and can be ordered so that conditions 3 and 4 are satisfied. If all eight vertices that appear in the above conditions are distinct, then the two sets of four can be ordered independently, and the arrangement can be completed to an ASM-ordering of the full vertex set.  

We note the only possible coincidences between the four-vertex sets $\{d,n-j,j,n\}$ and $\{1,j+1,d+1,n-j+1\}$, as follows.
$$
1=j,\  n-j+1=n,\ d+1=n-j,\ j+1=d.
$$
The first two possibilities above are equivalent, and so are the second two. All four cannot be satisfied simultaneously, since $j$ cannot be simultaneously equal to $1$ and $d-1$, as $d\ge 3$. The positions of $1$ and $n-j+1$, or of $d+1$ and $j+1$, may be constrained by the appearance of $d,n-j,j,n$ in that order in a candidate ASM-ordering. In the first case, $d+1$ and $j+1$ may be inserted freely and independently, in order to satisfy conditions 3 and 4. In the second case, the same applies to $1$ and $n-j+1$, hence the following statement.

\begin{theorem}\label{type 1 asm} 
Every $PT$-matrix of finite multiplicative order of Type 1 is permutation  similar to an alternating sign matrix.
\end{theorem}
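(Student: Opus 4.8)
The plan is to produce, for the standard-form matrix $A = C_n + T(1,n-j,d+1,j)$ supplied by Theorem \ref{Type 1 matrix}, an explicit ASM-ordering of its vertex set. This suffices to prove the full statement, because the transpose of an ASM is again an ASM, so the transposition permitted in Theorem \ref{Type 1 matrix} does not affect ASM-permutability. As explained above, finding an ASM-ordering is exactly the combinatorial problem of arranging the $n$ vertices so that the four betweenness conditions 1--4 hold simultaneously; every row and column of $A$ other than the four occupied by $T$ carries a single $+1$ and so imposes no constraint.

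First I would observe that the two row conditions (conditions 1 and 2) pin down the relative order of the quadruple $\{d,n-j,j,n\}$ up to reversal, forcing the linear order $d,n-j,j,n$ or its reverse. Since reversing an ASM-ordering yields another ASM-ordering (it is permutation similarity by the order-reversing permutation matrix, and the $180^\circ$ rotation of an ASM is an ASM), I may fix the order $d,n-j,j,n$. An identical argument shows that the two column conditions (conditions 3 and 4) force the order $j+1,1,d+1,n-j+1$ on the quadruple $\{1,d+1,j+1,n-j+1\}$.

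The generic case is that in which these two quadruples are disjoint: then their prescribed internal orders can be realized on two disjoint blocks of positions and interleaved arbitrarily with the remaining $n-8$ vertices, so all four conditions hold at once. The main obstacle, and the only real content, is the degenerate case where the quadruples share a vertex. Here I would enumerate the possible coincidences between $\{d,n-j,j,n\}$ and $\{1,d+1,j+1,n-j+1\}$ and check that they collapse to exactly two mutually exclusive situations: $j=1$ (which simultaneously identifies $j$ with $1$ and $n$ with $n-j+1$) and $j=d-1$ (which identifies $d$ with $j+1$ and $n-j$ with $d+1$). These cannot occur together, since $j=1=d-1$ would force $d=2$, contrary to $d\ge 3$.

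Finally, in each degenerate case the two shared vertices already have their positions fixed by the row order $d,n-j,j,n$, while the two remaining second-quadruple vertices ($d+1$ and $j+1$ when $j=1$; $1$ and $n-j+1$ when $j=d-1$) are absent from the first quadruple. I would verify that these two free vertices can be inserted one on each side of the relevant fixed vertex so as to satisfy conditions 3 and 4 without disturbing 1 and 2: for instance, when $j=1$ one places $d+1$ between the already-fixed vertices $1$ and $n$ to meet condition 4, and places $j+1$ before $1$ to meet condition 3. This produces an ASM-ordering in every case and proves the theorem. The delicate part is purely the coincidence bookkeeping --- confirming that the order forced on the first quadruple never obstructs the independent placement demanded by the second.
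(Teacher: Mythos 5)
Your proposal is correct and follows essentially the same route as the paper: reduce to the standard form $C_n+T(1,n-j,d+1,j)$, note that the two row conditions fix the order of $\{d,n-j,j,n\}$ up to reversal, check that the only coincidences with $\{1,j+1,d+1,n-j+1\}$ are the mutually exclusive cases $j=1$ and $j=d-1$ (impossible together since $d\ge 3$), and insert the two unconstrained vertices to satisfy the column conditions. Your explicit placements in the degenerate cases check out, so the argument is complete.
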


\subsection{Type 2(c)}
By Theorem \ref{2c matrix}, a $PT$-matrix of Type 2(c) of finite order (or its transpose) is permutation similar to 
\begin{equation}\label{2c pq}
A=C_p\oplus C_q+T(1,n,h+1,n-l),
\end{equation}
where $1\le h< p,\ 1\le l<q$, and $\gcd (p,q)$ divides at least one of $h$ and $l$. We write $1,\dots ,n$ for the vertices of the graph $\Gamma$ determined by $A$, ordered according to the rows and columns of $A$. From Rows 1 and $h+1$ and Columns $n-l$ and $n$ of $A$, we observe that an ordering of the vertices $1,\dots ,n$ is an ASM-ordering if and only if it satisfies the following four conditions.
\begin{enumerate}
    \item $n-l$ occurs between $p$ and $n$;
    \item $n$ occurs between $n-l$ and $h$;
    \item $1$ occurs between $h+1$ and $n-l+1$;
    \item $h+1$ occurs between $1$ and $p+1$.
    \end{enumerate}
The four vertices that appear in the first two conditions are distinct, and so are the four that appear in conditions 3 and 4. If an ASM-ordering exists, then one exists in which the vertices $p,\ n-l, n, h$ occur in that order. We consider when conditions 3 and 4 are compatible with this constraint.    

Potential intersections of the four-vertex sets $\{n-l,p,n,h\}$ and $\{1,h+1,n-l+1,h+1\}$ are limited to the following possibilities:
$$
1=h,\ h+1=p,\ n-l+1=n,\ p+1=n-l. 
$$
If at most two of the above equalities hold, it is possible to insert the remaining vertices from $\{1,h,n-l+1,p+1\}$ to ensure that conditions 3 and 4 are satisfied. If any 3 of them hold however, conditions 3 and 4 are incompatible and cannot be simultaneously satisfied. This occurs in the following two cases.
\begin{enumerate}
    \item[Case 1] $h=1$ and $h+1=p$, and $l$ is either equal to $1$ or $q-1$. 
    In this case $p=2$, and the two vertices in the $q$-cycle that have indegree 3 are consecutive in the $q$-cycle.
    \item[Case 2] Alternatively, $l=1$ and $l=q-1$, and $h$ is either equal to $1$ or $p-1$. In this case $q=2$ and the two vertices of the $p$-cycle that have outdegree 3 are consecutive in the $p$-cycle. 
\end{enumerate}
We conclude as follows. 
\begin{theorem} \label{type 2c asm graph}
Let $\Gamma$ be a $PT$-graph of type 2(c) as defined in Theorem \ref{graphs}. Then there is an ordering of the vertices of $\Gamma$ whose corresponding $(0,1,-1)$-matrix is an ASM, unless one of the cycles in $\Gamma_P$ has length 2, and the two vertices of $\Gamma_T$ in the other cycle are consecutive in that cycle.
\end{theorem}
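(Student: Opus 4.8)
The plan is to work with the standard form provided by Theorem~\ref{2c matrix}: up to permutation similarity and transposition, the matrix of $\Gamma$ may be taken to be $A=C_p\oplus C_q+T(1,n,h+1,n-l)$ with $2\le p,q$, $1\le h<p$ and $1\le l<q$. First I would translate the requirement that \emph{some} vertex reordering yields an ASM into a purely combinatorial condition on a linear ordering of the $n$ vertices. The only rows and columns of $A$ carrying three nonzero entries are the two rows and two columns meeting the $T$-block, and in each of these the single $-1$ must fall strictly between the two $+1$ entries, the remaining rows and columns having a single (positive) entry automatically. Reading off the positions of these entries yields exactly the four betweenness conditions (1)--(4) listed before the statement, the first two involving the vertex set $S_1=\{p,\,n-l,\,n,\,h\}$ and the last two the set $S_2=\{1,\,h+1,\,n-l+1,\,p+1\}$, each consisting of four distinct vertices.

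Next I would exploit that conditions (1) and (2) alone force the vertices of $S_1$ to appear, up to reversal of the whole ordering (which preserves the ASM property), in the order $p,\,n-l,\,n,\,h$; symmetrically, (3) and (4) constrain only $S_2$. Hence if $S_1$ and $S_2$ were disjoint the two blocks could be ordered independently and the remaining $n-8$ vertices inserted arbitrarily, producing an ASM-ordering, so every obstruction must come from coincidences between $S_1$ and $S_2$. Using $p,q\ge 2$ together with $h<p$ and $l<q$ to discard the impossible identifications, I would show that the only feasible coincidences are the four equalities $h=1$, $h=p-1$, $l=1$ and $l=q-1$ (the latter two being $n-l+1=n$ and $p+1=n-l$). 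When at most two of these hold, at least two vertices of $S_2$ remain free of $S_1$, and a short check confirms they can be slotted in to satisfy (3) and (4) against the fixed order of $S_1$.

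The decisive step, which I expect to be the main obstacle, is to pin down exactly when (3) and (4) become \emph{incompatible} with the prescribed order of $S_1$. Here the argument is that this occurs precisely when three (or all four) of the coincidences hold: then three of the four vertices of $S_2$ are pinned into the rigid positions dictated by $S_1$, and one verifies directly that the betweenness demanded by (3) or (4) is violated wherever the single remaining vertex is placed. A pigeonhole observation then identifies these cases with the claimed exceptions: any three of the four equalities must contain both $h=1,\,h=p-1$ (forcing $p=2$) or both $l=1,\,l=q-1$ (forcing $q=2$). Conversely, when $p=2$ the identifications $h=1$ and $h=p-1$ hold automatically, so three coincidences occur exactly when $l\in\{1,q-1\}$, i.e.\ when the two vertices of $\Gamma_T$ on the $q$-cycle are consecutive; the case $q=2$ is symmetric. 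This matches the asserted exceptional configuration---one cycle of length $2$ with the two $\Gamma_T$-vertices of the other cycle adjacent---and shows it is precisely the obstruction to ASM-permutability. The routine parts I would defer are the sign-and-position bookkeeping yielding the four betweenness conditions and the explicit interleaving in the case of at most two coincidences.
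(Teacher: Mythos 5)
Your proposal is correct and follows essentially the same route as the paper's proof: the same standard form $C_p\oplus C_q+T(1,n,h+1,n-l)$, the same four betweenness conditions on the two four-vertex sets $\{p,n-l,n,h\}$ and $\{1,h+1,n-l+1,p+1\}$, the same list of four possible coincidences $h=1$, $h=p-1$, $l=1$, $l=q-1$, and the same counting argument (at most two coincidences admit an ASM-ordering, three or more force $p=2$ or $q=2$ with the other cycle's $\Gamma_T$-vertices adjacent). No substantive differences to report.
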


\begin{theorem}\label{type 2c asm matrix}
If the matrix $A$ of (\ref{2c pq}) has finite order, then it is permutation similar to an ASM, except in the following two cases
\begin{itemize}
    \item $p=2$, $q$ is odd, $q\ge 3$ and $l\in\{1,q-1\}$;
    \item $q=2$, $p$ is odd, $p\ge 3$ and $h\in\{1,p-1\}$.
\end{itemize}
\end{theorem}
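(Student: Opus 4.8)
The plan is to translate the statement of Theorem \ref{type 2c asm matrix} directly into the graph-theoretic formulation of Theorem \ref{type 2c asm graph}, whose proof has already been carried out in the preceding discussion. The key observation is that the matrix $A$ of (\ref{2c pq}) and the graph $\Gamma$ determined by it encode exactly the same combinatorial data, so the two exceptional configurations identified in Theorem \ref{type 2c asm graph} must correspond to the two exceptional families of matrices claimed here. First I would note that, by Theorem \ref{type 2c asm graph}, $A$ fails to be ASM-permutable precisely when one of the cycles of $\Gamma_P$ has length $2$ and the two vertices of $\Gamma_T$ lying in the other cycle are consecutive along that cycle. Since $\Gamma_P = C_p \oplus C_q$, the two cycles have lengths $p$ and $q$, and the exceptional condition splits into the two cases $p=2$ and $q=2$.

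Next I would unwind the parametrization to see what ``consecutive vertices'' means in terms of $h$ and $l$. In the standard form $A=C_p\oplus C_q+T(1,n,h+1,n-l)$, the two vertices of $\Gamma_T$ on the $p$-cycle are separated by a path of length $h$, and the two on the $q$-cycle are separated by a path of length $l$. Being consecutive on a cycle of length $q$ means the separation is $1$ in one direction, equivalently $l \in \{1, q-1\}$; similarly, consecutiveness on the $p$-cycle means $h \in \{1, p-1\}$. Thus the case $q=2$ of the graph theorem, where the $q$-cycle has length $2$ and the two $p$-cycle vertices are consecutive, becomes the condition $q=2$ together with $h\in\{1,p-1\}$, and symmetrically the case $p=2$ becomes $p=2$ together with $l\in\{1,q-1\}$. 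This reproduces the two displayed exceptional families.

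The remaining point to verify is the parity restrictions ($q$ odd, $q\ge 3$ in the first case and $p$ odd, $p\ge 3$ in the second), which do not appear explicitly in the graph formulation. These come from combining the finite-order hypothesis of Theorem \ref{2c matrix} with the exceptional cases. In the first exceptional family we have $p=2$, so $\gcd(p,q)$ is $1$ if $q$ is odd and $2$ if $q$ is even; the finite-order condition requires $\gcd(p,q)$ to divide at least one of $h$ and $l$. When $p=2$ we have $h=1$, and if $q$ were even then $\gcd(p,q)=2$ would fail to divide $h=1$ and would have to divide $l\in\{1,q-1\}$, both of which are odd, giving a contradiction. Hence $q$ must be odd, and $q\ge 3$ since the $q$-cycle is a genuine second cycle distinct from the length-$2$ cycle. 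The symmetric argument with the roles of $p$ and $q$ interchanged forces $p$ odd and $p\ge 3$ in the second family.

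I expect the main obstacle to be bookkeeping rather than conceptual difficulty: one must check carefully that the exceptional configuration in Theorem \ref{type 2c asm graph} is genuinely compatible with the finite-order hypothesis, i.e. that the excluded matrices actually have finite order (otherwise they would not be in the scope of the theorem at all), and that the parity conditions are exactly what the divisibility constraint $\gcd(p,q)\mid h$ or $\gcd(p,q)\mid l$ forces in each case. A subtle point worth stating explicitly is that the exceptional cases are stated for $A$ itself, but Theorem \ref{2c matrix} allows $A$ or its transpose to be in standard form; since transposition corresponds to arc reversal, which swaps the roles of the two cycles and preserves the ``length-$2$ cycle with consecutive vertices'' property, the exceptional set is symmetric under transposition and the two displayed cases together are closed under this symmetry, so no further cases arise.
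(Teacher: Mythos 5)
Your proposal is correct and takes essentially the same route as the paper: the paper derives Theorem \ref{type 2c asm matrix} from the very same vertex-ordering analysis that yields Theorem \ref{type 2c asm graph}, translating the exceptional configurations into the parameters $h$ and $l$. The oddness and $\ge 3$ restrictions are, exactly as you argue, consequences of the finite-order condition of Theorem \ref{2c matrix} (that $\gcd(p,q)$ divide $h$ or $l$) and not of ASM-permutability, which is precisely the remark the paper makes after the theorem.
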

The stipulation that $q$ or $p$ is odd in the two cases of Theorem \ref{type 2c asm matrix} arise from the finite order conditions in Theorem \ref{2c matrix}, and not from considerations of ASM-permutability.

\subsection{Type 2(d)}
By Theorem \ref{2d}, a $PT$-matrix of finite order of Type 2(d) (or its transpose) is permutation similar to 
\begin{equation}\label{type 2d matrix}
A=C_p\oplus C_q+ T(i_1,n,i_2,p),
\end{equation}
where $n=p+q$, $i_1\neq i_2$, $i_1\le p$, $i_2\le p$ and $i_1+i_2=p+2$, with some additional conditions that do not enter our analysis here. The condition $i_1+i_2=p+2$ ensures that $i_1\ge 2$ and $i_2\ge 2$. A vertex ordering of the corresponding graph is an ASM-ordering if and only if it satisfies the following conditions, arising respectively from Rows $i_1$ and $i_2$ and from Columns $p$ and $n$.
\begin{enumerate}
    \item $p$ occurs between $i_1-1$ and $n$;
    \item $n$ occurs between $i_2-1$ and $p$;
    \item $i_1$ occurs between $1$ and $i_2$;
    \item $i_2$ occurs between 1 and $p+1$.
\end{enumerate}
The four indices occurring in the first two conditions are distinct, and if an ASM-ordering exists, then one exists in which the entries $i_1-1,\ p,\ n,\ i_2-1$ occur in that order. Possible repetitions among the eight vertices that appear in the four conditions above are as follows.
\begin{equation}\label{four conditions}
    i_1=p\ \mathrm{or}\ i_1=i_2-1,\ 
    1=i_1-1\ \mathrm{or}\ 1=i_2-1,\ 
    i_2=p\ \mathrm{or}\ i_2=i_1-1,\ p+1=n.
\end{equation}
In each of the first three points above, the two possibilities are mutually exclusive. As in previous cases, if up to two elements of $\{i_1,1,i_2,p+1\}$ belong to $\{i_1-1,\ p,\ n,\ i_2-1\}$, an ASM-ordering of all $n$ vertices may be completed. 

First we suppose that $i_1$ and $i_2$ are not consecutive, so that $i_1\neq i_2-1$ and $i_2\neq i_1-1$. In this case $\{i_1,1,i_2,p+1\}$ and $\{i_1-1,\ p,\ n,\ i_2-1\}$ can intersect in at most three elements, and this occurs if and only if $\{i_1,i_2\}=\{2,p\}$ and $p+1=n$, so $q=1$. If $(i_1,i_2)=(2,p)$, then $1,i_2,p+1$ appear in that order and $i_1$ can be inserted between $1$ and $i_2$, so that all four requirements are satisfied. If $(i_1,i_2)=(p,2)$, then $i_2,p+1,1$ occur in that order, and $i_1$ cannot be inserted so that the third and fourth conditions are simultaneously satisfied. 

Suppose now that $i_1$ and $i_2$ are consecutive, and suppose first that $i_1=i_2-1$. Then if at least three of the four equalities of (\ref{four conditions}) are satisfied, either $1=i_1-1$ or $i_2=p$. Since $i_1+i_2=p+2$, each of these conditions implies that $p=3$. If either of them is satisfied then both are, and in this situation condition 3 is not satisfied; $i_1$ does not occur between $1$ and $i_2$. 

On the other hand if $i_2=i_1-1$ and three or more of the conditions of (\ref{four conditions}) hold, then $i_1=p=3$ and $i_2,i_1,1$ occur in that order, so that the third ordering condition is satisfied. The fourth can be satisfied by a suitable choice of position for $p+1$, provided that $p+1\neq n$ in which case the fourth condition cannot be satisfied. No ASM-ordering exists in the case $(i_1,i_2,q)=(3,2,1)$.

Our conclusion for $PT$-matrices of type 2(d) is as follows.
\begin{theorem}\label{type 2d asm matrix}
The $PT$-matrix $A$ of (\ref{type 2d matrix}) is permutation similar to an ASM, except where $q=1$, $i_2=2$ and $i_1=p$.
\end{theorem}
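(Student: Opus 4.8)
The plan is to analyze when the matrix $A$ of (\ref{type 2d matrix}) admits an ASM-ordering of the vertices of its graph $\Gamma$, by working through the four ordering conditions listed immediately above the theorem statement. These four conditions arise from Rows $i_1$ and $i_2$ (each containing a $+1$ from the permutation cycle, a $-1$ from the red arc of $T$, and a $+1$ from the blue arc of $T$) and from Columns $p$ and $n$ (similarly containing three nonzero entries). Each condition requires a particular vertex labelling a $-1$ to sit \emph{between} two vertices labelling the two $+1$'s in the same row or column, which is exactly the alternating-sign constraint. The first step is to observe that the four vertices $i_1-1,\ p,\ n,\ i_2-1$ appearing in conditions 1 and 2 are distinct, and that since the reverse of an ASM-ordering is again an ASM-ordering, we may assume without loss that these occur in the order $i_1-1,\ p,\ n,\ i_2-1$. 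The bulk of the work is then to check whether the remaining vertices $i_1,\ 1,\ i_2,\ p+1$ from conditions 3 and 4 can be inserted consistently.

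The main structural idea, as in the Type 1 and Type 2(c) analyses, is that trouble arises only from \emph{coincidences} between the two four-element vertex sets $\{i_1-1,p,n,i_2-1\}$ and $\{i_1,1,i_2,p+1\}$. I would enumerate all possible coincidences as in (\ref{four conditions}): namely $i_1=p$ or $i_1=i_2-1$; $1=i_1-1$ or $1=i_2-1$; $i_2=p$ or $i_2=i_1-1$; and $p+1=n$. The governing principle (to be justified by a short freedom-of-insertion argument) is that if at most two of these coincidences hold, enough vertices remain free to insert so that conditions 3 and 4 can both be met, yielding an ASM-ordering; obstructions can only appear when three or more coincidences are forced simultaneously. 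So the proof reduces to a finite case analysis of which triples of coincidences are mutually satisfiable under the standing constraint $i_1+i_2=p+2$ with $i_1\neq i_2$.

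The case split I would carry out is on whether $i_1$ and $i_2$ are consecutive. If they are not consecutive (so $i_1\neq i_2-1$ and $i_2\neq i_1-1$), the two sets can meet in at most three elements, and this forces $\{i_1,i_2\}=\{2,p\}$ together with $q=1$; here the orientation matters, and only the subcase $(i_1,i_2)=(p,2)$ fails, because then $i_2,p+1,1$ appear in that fixed order and $i_1$ cannot be placed between $1$ and $i_2$. If $i_1$ and $i_2$ are consecutive, I treat $i_1=i_2-1$ and $i_2=i_1-1$ separately: the relation $i_1+i_2=p+2$ forces $p=3$ in the problematic subcases, and a direct check of the induced orderings shows the only genuine failure is $(i_1,i_2,q)=(3,2,1)$. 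Crucially, $(3,2,1)$ is the instance of $(i_1,i_2)=(p,2)$, $q=1$ with $p=3$, so the two exceptional families coincide into the single stated exception $q=1,\ i_2=2,\ i_1=p$.

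The step I expect to be the main obstacle is the careful bookkeeping in the consecutive case, where the coincidence $i_2=i_1-1$ interacts with $1=i_2-1$ and $i_2=p$, and where the \emph{orientation} imposed by the fixed order $i_1-1,p,n,i_2-1$ determines feasibility asymmetrically in $i_1$ versus $i_2$. The subtlety is that a coincidence such as $i_1=p$ merges a condition-3/4 vertex into the already-ordered block, removing a degree of freedom and thereby \emph{pinning} the relative position of the merged vertex; one must verify in each surviving triple whether the forced relative order is compatible with the betweenness demanded by conditions 3 and 4, rather than merely counting coincidences. Once it is confirmed that every obstruction collapses to $(i_1,i_2,q)=(p,2,1)$ and that all other configurations admit a valid insertion, the theorem follows.
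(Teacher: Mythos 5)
Your overall route is the same as the paper's: the four betweenness conditions, the reduction via reversal to the pinned order $i_1-1,\ p,\ n,\ i_2-1$, the enumeration of coincidences in (\ref{four conditions}), the principle that at most two coincidences leave enough freedom to complete an ASM-ordering, and the split into non-consecutive and consecutive cases. Your non-consecutive analysis, including the asymmetry that makes $(i_1,i_2)=(p,2)$, $q=1$ fail while $(i_1,i_2)=(2,p)$, $q=1$ succeeds, matches the paper exactly.

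The gap is in the consecutive case, where you assert that ``a direct check of the induced orderings shows the only genuine failure is $(i_1,i_2,q)=(3,2,1)$.'' That is not what the check yields. In the subcase $i_1=i_2-1$, three or more coincidences force $(i_1,i_2,p)=(2,3,3)$, so that $i_1-1=1$, $i_2=p$ and $i_2-1=i_1$; the pinned block then reads $1,\ i_2,\ n,\ i_1$, and condition 3 ($i_1$ between $1$ and $i_2$) is violated with all three of its vertices already pinned, so no insertion of the remaining free vertices can repair it. Since the vertex $n$ plays no role in this obstruction, the failure occurs for \emph{every} $q$, and by Theorem \ref{2d} the matrices $C_3\oplus C_q+T(2,n,3,3)$ do have finite multiplicative order whenever $4\nmid q$ (for instance $q=2$ gives a matrix of order $4$). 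So the consecutive case produces a second exceptional family, $i_1=2$, $i_2=p=3$, which your claimed conclusion omits; the conclusion you state cannot be obtained from the check you describe. Notably, the paper's own analysis records precisely this failure (``condition 3 is not satisfied''), and its absence from the statement of Theorem \ref{type 2d asm matrix} can only be reconciled with the analysis by excluding $i_2=p$ from the scope of (\ref{type 2d matrix}) altogether --- which is what the requirement $v_1\neq u_2$ in the Type 2(d) clause of Theorem \ref{graphs} amounts to, since $i_2=p$ turns the blue arc $(u_2,v_1)$ into a loop. Your proposal neither imposes that exclusion nor lists the extra exception, so the case analysis as you describe it does not close.
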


\subsection{Type 3(c)}
A $n\times n$ $PT$-matrix of Type 3(c) is permutation similar to 
\begin{equation}\label{type 3c matrix}
A = (C_p \oplus C_q\oplus C_m)+T(1,p+q,i,p+q+m),
\end{equation}
where $p,q$ and $m$ are positive integers with $p\ge 2$ and $p+q+m=n$, and $1<i\le p$. 
An ordering of the vertices $1,\dots ,n$ of the graph $\Gamma$ determined by $A$ is an ASM-ordering if and only if it satisfies the following conditions. 
\begin{enumerate}
    \item $n$ occurs between $p$ and $p+q$;
    \item $p+q$ occurs between $i-1$ and $n$;
    \item $i$ occurs between $1$ and $p+1$;
    \item $1$ occurs between $i$ and $p+q+1$.
\end{enumerate}
From the first two conditions we deduce that if an ASM-ordering exists, then one exists in which the vertices $p,\ n,\ p+q$ and $i-1$ occur in that order. The possible repetitions among the eight vertices that appear above are
$$
i=p,\ 1=i-1,\ p+1=p+q,\ p+q+1=n.
$$
If any three of the above equalities hold, then conditions 3. and 4. cannot be simultaneously satisfied by the insertion of the remaining element, hence the following conclusion on ASM-permutability for $PT$-matrices and graphs of type 3(c).

\begin{theorem}\label{type 3c asm matrix}
The matrix $A$ of (\ref{type 3c matrix}) is permutation similar to an ASM, except in the following three cases: 
\begin{itemize}
    \item $p=i=2$, and  $1\in\{q,m\}$;
    \item $i=p$ and $q=m=1$;
    \item $i=2$ and $q=m=1$.
\end{itemize} 
\end{theorem}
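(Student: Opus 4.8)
The plan is to analyze the four ASM-ordering conditions directly, following the same combinatorial strategy used in Theorems \ref{type 1 asm}, \ref{type 2c asm matrix}, and \ref{type 2d asm matrix}. The key observation is that an ASM-ordering exists precisely when the eight vertices appearing in the four conditions can be arranged consistently, and the only obstructions arise when too many of these vertices coincide, forcing conflicting betweenness requirements. I would begin by recording the four conditions explicitly and noting that the vertices $p,\ n,\ p+q,\ i-1$ appearing in conditions 1 and 2 are distinct (using $1<i\le p$ and $p\ge 2$), so by reversal symmetry we may assume they occur in the order $p,\ n,\ p+q,\ i-1$.

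Next I would enumerate the possible coincidences between the two four-vertex sets $\{p,n,p+q,i-1\}$ and $\{i,1,p+1,p+q+1\}$. Since $n=p+q+m$, these reduce exactly to the four listed equalities $i=p$, $1=i-1$, $p+1=p+q$, $p+q+1=n$, equivalently $i=p$, $i=2$, $q=1$, and $m=1$. I would then argue, as in the earlier types, that if at most two of these hold, the remaining vertices among $\{i,1,p+1,p+q+1\}$ can be inserted freely to satisfy conditions 3 and 4, yielding an ASM-ordering. The substance of the proof is therefore the case analysis when three (or four) of the equalities hold simultaneously: these correspond to the three exceptional cases in the statement, namely $p=i=2$ with $1\in\{q,m\}$ (giving $i=2$, $i=p$, and one of $q=1$ or $m=1$), $i=p$ with $q=m=1$, and $i=2$ with $q=m=1$.

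The main obstacle, and the step requiring genuine care, is verifying that in each of these three exceptional configurations conditions 3 and 4 are genuinely incompatible, while confirming that no \emph{fourth} exceptional family has been overlooked. Concretely, when three equalities hold the relevant vertices are forced into a fixed cyclic arrangement by conditions 1 and 2, and I would check that the single remaining free vertex cannot be placed to satisfy both the betweenness requirement "$i$ between $1$ and $p+1$" and "$1$ between $i$ and $p+q+1$" at once. For instance, when $i=p$ and $q=m=1$ the vertices $p+q$ and $n$ collapse against $p+1$ and $p+q+1$, pinning $1$ and $i$ on the same side and violating one of the two conditions; the symmetric failures occur in the other two cases. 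I would present this verification compactly by tabulating, for each exceptional case, the induced partial order on the surviving vertices and exhibiting the conflict, rather than grinding through all orderings. Finally I would note that the possibility of all four equalities holding simultaneously is excluded, since $i=p$ and $i=2$ together force $p=2$, and one checks this still lands inside the listed case $p=i=2$ with $q=m=1\in\{q,m\}$ rather than producing a new exception.
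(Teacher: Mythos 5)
Your proposal is correct and follows essentially the same route as the paper's own proof: the same four betweenness conditions, the same use of reversal symmetry to fix the order $p,\ n,\ p+q,\ i-1$, the same enumeration of the four possible coincidences $i=p$, $i=2$, $q=1$, $m=1$ between the two four-vertex sets, and the same conclusion that an ASM-ordering can be completed exactly when at most two of these equalities hold. Your closing check that all four equalities holding simultaneously ($p=i=2$, $q=m=1$) does not produce a fourth exceptional family, but lands inside the first listed case, is consistent with the paper's statement.
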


\section{Conclusion}
The results of this article identify all $PT$-matrices of finite order, whose associated graphs are weakly connected, or equivalently the property that every cycle of the permutation component includes a vertex that is incident with an arc corresponding to an entry of the $T$-block. Such examples can be augmented by the addition of permutation matrices as new diagonal blocks. We have observed that $n\times n$ elementary $PT$-matrices of types 1 and 2(c) may have finite multiplicative orders that do not occur in the symmetric group of degree $n$. 

It is not true that every alternating sign matrix of finite multiplicative order is permutation equivalent to a matrix direct sum of permutations and elementary $PT$-matrices, as the following example shows. 

$$
A=\left(\begin{array}{rrrrrr}
0 & 0 & 1 & 0 & 0 & 0 \\
0 & 1 & 0 & 0 & 0 & 0 \\
1 & -1 & 0 & 1 & 0 & 0 \\
0 & 1 & -1 & 0 & 0 & 1 \\
0 & 0 & 1 & -1 & 1 & 0 \\
0 & 0 & 0 & 1 & 0 & 0
\end{array}\right)
$$

This matrix $A$ has multiplicative order 12 and its minimum polynomial is 
$$\Phi_2(x)\Phi_{12}(x)=(x-1)(x^4-x^2+1).$$ It is clear that $A$ is not a $PT$-matrix since it has three negative entries; however it may be obtained from a permutation matrix by the addition of two $T$-blocks.

While there exist $6\times 6$ elementary $PT$-matrices of order 12, the analysis in Section \ref{algebra} confirms that none of their characteristic polynomials has $\Phi_{12}(x)$ as a factor. Thus $A$ is not similar to a $PT$-matrix. Since the symmetric group of degree 6 has no element of order 12, $A$ is not similar to a permutation matrix either. 

It would be of interest to know the maximum possible number of negative entries in a $n\times n$ ASM of finite multiplicative order. The maximum possible number of negative entries in an ASM of specified size occurs in the \emph{diamond ASMs}, which never have finite order, since their spectral radii exceed 1, as shown in \cite{spectra2}.

\bigskip
\textbf{Acknowledgement}\\ This research was partly supported by a grant from the Office of Equality, Diversity and Inclusivity at the National University of Ireland, Galway, whose support is gratefully appreciated.

\end{document}